\documentclass{amsart}
\usepackage{amsfonts}
\usepackage{amsthm}
\usepackage{amsmath}
\usepackage{amssymb}
\usepackage{amscd}
\usepackage{cite}
\usepackage[latin2]{inputenc}
\usepackage{t1enc}
\usepackage{mathrsfs}
\usepackage{indentfirst}
\usepackage{graphicx}
\usepackage{graphics}
\usepackage{hyperref}
\usepackage{pict2e}
\usepackage{epic}
\usepackage{tikz-cd}
\numberwithin{equation}{section}
\usepackage{epstopdf}

\newenvironment{spmatrix}
  {\left(\begin{smallmatrix}}
  {\end{smallmatrix}\right)}
\newtheorem{thm}{Theorem}[section]
\newtheorem{cor}[thm]{Corollary}
\newtheorem{lem}[thm]{Lemma}
\newtheorem{prop}[thm]{Proposition}

\theoremstyle{definition}

\newtheorem{defn}[thm]{Definition}
\theoremstyle{remark}
\newtheorem{rem}[thm]{Remark}

\newtheorem{note}[thm]{Note}

\def\A{\mathbb{A}}
\def\C{\mathbb{C}}

\def\Q{\mathbb{Q}}
\def\R{\mathbb{R}} 
\def\Z{\mathbb{Z}}

\def\begcd{\begin{tikzcd}}
\def\endcd{\end{tikzcd}}
\def\begenum{\begin{enumerate}}
\def\endenum{\end{enumerate}}
\def\begpmat{\begin{pmatrix}}
\def\endpmat{\end{pmatrix}}
\def\a{\alpha}

\def\e{\varepsilon}
\def\p{\prime}
\def\g{\gamma}
\def\s{\sigma}

\def\k{\kappa}

\def\O{\mathcal{O}}
\def\fa{\mathfrak{a}}
\def\fb{\mathfrak{b}}
\def\fc{\mathfrak{c}}
\def\fd{\mathfrak{d}}

\def\fp{\mathfrak{p}}

\def\fN{\mathfrak{N}}

\newcommand{\Hom}{{\rm{Hom}}}
\newcommand{\ovl}{\overline}
\newcommand{\Tr}{\operatorname{Tr}}
\newcommand{\id}{\operatorname{id}}
\newcommand{\co}{\operatorname{c}}

\newcommand{\GL}{\operatorname{GL}}

\begin{document}

\title[Special $L$-values of cusp forms on GL(2)]{Non-vanishing of special $L$-values of cusp forms on GL(2) with totally split prime power twists}
\date{\today}
\author[Jaesung Kwon]{JAESUNG KWON}\author[Hae-Sang Sun]{HAE-SANG SUN}
\address{Ulsan National Institute of Science and Technology \\ Ulsan \\ Korea}\email{jaesung.math@gmail.com}\address{Ulsan National Institute of Science and Technology \\ Ulsan \\ Korea}\email{haesang@unist.ac.kr}

\subjclass[2010]{11F67, 11F12}
\keywords{cuspidal automorphic forms; special $L$-values; non-vanishing} 

\begin{abstract} 
We prove the non-vanishing of special $L$-values of cuspidal automorphic forms on GL(2) twisted by Hecke characters of prime power orders and totally split prime power conductors. Main ingredients of the proof are estimating the Galois averages of fast convergent series expressions of special $L$-values and considering Shintani cone decomposition.
\end{abstract}

\maketitle
\setcounter{tocdepth}{1} 
\tableofcontents

\section{Introduction}

Studying the non-vanishing of special $L$-values is interesting as fascinating mathematical techniques and ingredients are involved. There are plenty of non-vanishing results from which important arithmetic consequences arise.

Rohrlich \cite{rohrlich1984onl} shows the non-vanishing of the special values of cyclotomic modular $L$-values by estimating the Galois averages of fast convergent series expressions of the special $L$-values. As a consequence, Rohrlich \cite{rohrlich1984onl} deduces that the Mordell-Weil groups of CM elliptic curves over the cyclotomic $\Z_p$-extension of $\Q$ are finitely generated. 

One can consider more general settings. 
Rohrlich \cite{rohrlich1989nonvanishing} proves that the $L$-values of automorphic forms on $\GL(2)$ are non-vanishing for infinitely many GL(1) twists by modifying the idea of Shintani cone decomposition \cite{shintani1979aremark} to count the number of ideals with bounded norm in an arithmetic progression. 
Van Order \cite{van2019rankin} gives a proof of the non-vanishing of cyclotomic $L$-values of non-dihedral automorphic forms on GL(2) over totally real fields.

The main result of the present paper is to show the non-vanishing of the special $L$-values of automorphic forms on $\GL(2)$ over general number fields twisted by Hecke characters of $p$-power order and totally split prime power conductors, which is a generalization of Rohrlich \cite[Theorem]{rohrlich1984onl} or Luo and Ramakrishnan \cite[Proposition 2.2]{luo1997determination}.

\subsection{Main Theorems}\label{Main:Theorems} Let us give some notations and settings. Let $F$ be a number field, $\fN$ an integral ideal of $F$, $S_{(\mathbf{k},\mathbf{m}),J}(\fN,\chi)$ the space of cuspidal automorphic forms over $F$ of a cohomological weight $(\mathbf{k},\mathbf{m})$, type $J$ and level $\fN$ with a central character $\chi$. Let $\frac{k}{2}\in\frac{1}{2}\mathbb{Z}_{>0}$ be the central critical point for $f$ in the arithmetic normalization. Let $\fp$ be a prime ideal of $F$ lying above an odd prime number $p$.
Let $f\in S_{(\mathbf{k},\mathbf{m}),J}(\fN,\chi)$ be a newform, $\psi$ be a Hecke character of $p$-power order with $\fp$-power conductors, $L(s,f\otimes\psi)$ the $L$-function attached to $f$ and $\psi$, $K_f$ the Hecke field of $f$ over $\Q$, which is the field adjoining all the Fourier-Whittaker coefficients of $f$ to $\Q$, and $n_0:=\textrm{max}\{m\in\Z|\ \mu_{p^m}\subset K_f\}$.

To obtain fast convergent series expressions of twisted modular $L$-values, namely the approximate functional equation of the $L$-function, we have to find the Fourier-Whittaker expansions of automorphic forms and its Mellin transforms. Also we need Atkin-Lehner theory for automorphic forms of GL(2) over $F$. These ingredients will be developed in Section \ref{cuspform} and \ref{sp:Lvalue} together with a brief introduction to automorphic forms of GL(2) over $F$.

In Section \ref{esti:arith:progress}, we estimate the number of ideals of bounded norm and the lower bound of absolute norms of elements in an arithmetic progression, which make it possible to calculate the limit of the Galois averages for our case. These are achieved by using the coherent cone decomposition (see Rohrlich \cite{rohrlich1989nonvanishing}). 

Define the Galois averages of special $L$-values by
$$
L_{\textrm{av}}(f\otimes\psi):=\frac{1}{[K_f(\psi):K_f]}\sum_{\s\in\text{Gal}(K_f(\psi)/K_f)} L\Big(\frac{k}{2},f\otimes\psi^\s\Big) .
$$
In Section \ref{hecke:char} and Section \ref{estimation}, we will discuss the Galois averages of twisted $L$-values and obtain their estimations, which play the key role in the proofs of main theorems. We would like to remark that for a prime $\fp$ of $F$ which does not split totally, the corresponding estimations of the Galois averages over the characters are worse than ones in the present paper, due to the existence of the corank one group of $p$-adic units.

We set $\theta\in[0,\frac{1}{2}]$ as a bound of an exponent of the eigenvalues of a Hecke eigenform.
Let us assume that $\fp$ is a totally split prime and coprime to $h_F\fd_F\fN$. Then we obtain an estimation of the Galois averages of special $L$-values which depends on $\theta$:
\begin{thm}\label{main:thm:0:intro} Suppose some parity condition (see (\ref{parity:cond}) and Remark \ref{parity:cond:rem}) on $F$, $(\mathbf{k},\mathbf{m})$ and $J$. If we have $\theta$ less than $\frac{1}{4}$ ($\frac{1}{6}$ for the case of $p=3$, $\frac{1}{4}$ otherwise), an estimation of $L_{\operatorname{av}}(f\otimes\psi_n)$ is given by
\begin{align}\begin{split}
L_{\operatorname{av}}(f\otimes\psi_n)=1+o(1)
\end{split}\end{align}
as $n$ tends to the infinity.
\end{thm}

Note that the more optimal $\theta$ we have, the better estimation on the error of the Galois averages we have. 

Let $(\psi_n)_n$ be a primitive element of $\Hom_{\text{cont}}(\text{Cl}(F,\fp^\infty),\mu_{\infty})$.
By applying Theorem \ref{main:thm:0:intro} together with the algebraicity result of Hida \cite{hida1994critical} and the bound $\theta=7/64$, which is obtained by Blomer-Brumley \cite{blomer2011ramanujan} and Nakasuji \cite{nakasuji2012generalized}, we obtain the following non-vanishing result:

\begin{thm}\label{main:thm:1:intro} Suppose the parity condition on $F$, $(\mathbf{k},\mathbf{m})$ and $J$. Then we have
$$
L\Big(\frac{k}{2},f\otimes\psi\Big)\neq 0
$$
for almost all Hecke characters $\psi$ over $F$ of $p$-power order and $\fp$-power conductor.
\end{thm}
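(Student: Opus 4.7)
The plan is to derive Theorem \ref{main:thm:1:intro} from Theorem \ref{main:thm:0:intro} via the algebraicity of normalized central $L$-values together with Galois equivariance. First, since the Blomer--Brumley/Nakasuji bound $\theta = 7/64$ satisfies $\theta < 1/4$, Theorem \ref{main:thm:0:intro} applies and yields
\[
L_{\textrm{av}}(f\otimes\psi_n) = 1 + o(1)
\]
as $n\to\infty$. Hence there exists an integer $N$ such that for every $n\ge N$ the Galois average $L_{\textrm{av}}(f\otimes\psi_n)$ is nonzero.

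Next, I would invoke Hida's algebraicity theorem \cite{hida1994critical}, which furnishes a period $\Omega(f,\psi)$ such that the ratio $L(k/2, f\otimes\psi)/\Omega(f,\psi)$ lies in $K_f(\psi)$, together with the Galois equivariance
\[
\sigma\!\left(\frac{L(k/2, f\otimes\psi)}{\Omega(f,\psi)}\right) = \frac{L(k/2, f\otimes\psi^\sigma)}{\Omega(f,\psi^\sigma)}
\]
for every $\sigma \in \operatorname{Gal}(K_f(\psi)/K_f)$. Since the periods are always nonzero, this produces a clean dichotomy inside a single Galois orbit: either all the central values $L(k/2, f\otimes\psi^\sigma)$ vanish, or none of them do. Combining with the previous step, for $n\ge N$ the average is nonzero, hence at least one value in the orbit is nonzero, hence by the dichotomy every value in the orbit is nonzero.

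Finally, since the Hecke characters of $p$-power order and $\fp$-power conductor decompose into countably many Galois orbits, parametrized through the compatible system $(\psi_n)_n$, only the orbits indexed by $n<N$ can contribute any zeros. This yields non-vanishing for all but finitely many such $\psi$, i.e.\ for almost all $\psi$ in the stated sense. The substantive analytic work is entirely absorbed into Theorem \ref{main:thm:0:intro}; given that theorem, the main subtlety in the present deduction is to confirm that Hida's period can be chosen coherently across a Galois orbit so that the orbitwise dichotomy holds uniformly, and that the parity condition on $F$, $(\mathbf{k},\mathbf{m})$ and $J$ ensures the relevant period is the one governing the central value $s=k/2$. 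This tracking of periods and parity choices is the only real obstacle in passing from the asymptotic estimate to the non-vanishing conclusion.
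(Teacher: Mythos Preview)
Your proposal is correct and follows the same approach as the paper: apply Theorem \ref{main:thm:0:intro} with the Blomer--Brumley/Nakasuji bound $\theta=7/64<1/4$ to get $L_{\operatorname{av}}(f\otimes\psi_n)\to 1$, then use Hida's algebraicity result to obtain the orbitwise vanishing dichotomy and conclude that only finitely many orbits can carry zeros. The paper phrases the algebraicity step as the equivalence $L(k/2,f\otimes\varphi)=0\Leftrightarrow L(k/2,f\otimes\varphi^\sigma)=0$ for $\sigma\in\operatorname{Gal}(K_f(\varphi)/K_f)$ (rather than via explicit periods) and finishes by contradiction rather than your direct orbit count, but the argument is the same.
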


Kim-Sun \cite{kim2017modular} obtain partial results toward the mod $p$ non-vanishing of cyclotomic modular $L$-values by studying the homological nature of modular symbols. Sun \cite{sun2018generation} proves that the Hecke field of a newform can be generated by a single special value of the modular $L$-function, by studying an additive variant of the Galois average, which supersedes the non-vanishing results. The first named author plans to generalize the results in Kim-Sun \cite{kim2017modular} and Sun \cite{sun2018generation} to the current setting.

\subsection{Notations}

Let us provide some notations which will be used globally in this paper. Let $F$ be a number field, $\A_F$ the adele ring of $F$, $\A_F^{(\infty)}$ the finite adele of $F$, $F_\infty:=F\otimes_\Q \R$ the infinite adele of $F$, $\O_F$ the integer ring of $F$, $\fd_F$ the different ideal of $F/\Q$, $d_F$ a finite idele of $\fd_F$, $D_F$ the discriminant of $F$, $h_F$ the class number of $F$, and $|\cdot|_{\A_F}=|\cdot|_{F_\infty}|\cdot|_{\A_F^{(\infty)}}$ the idelic norm. For any place $v$ of $F$, denote by $\O_{F,v}$ the completion of $\O_F$ with respect to $v$. Let $\widehat{A}:=A\otimes_\Z\widehat{\Z}$ for a $\Z$-algebra $A$. Let $p$ be an odd prime number coprime to $D_F$, $\zeta_m$ a primitive $p^m$-th root of unity and $\mu_m$ the set of $p^m$-th roots of unity. 
Let us set by $N$ the norm map of $F/\Q$ (or $\prod_{\fp|p}N_{F_\fp/\Q_p}$) and
by $\Tr$ the trace map of $F/\Q$ (or $\sum_{\fp|p}\Tr_{F_\fp/\Q_p}$). For $z\in\C$, denote $\textbf{e}(z):=\textrm{exp}(2\pi i z)$.

\section{Cusp forms}\label{cuspform}
\subsection{Cusp forms on GL(2)}
In this section, we will briefly give the definition of cuspidal automorphic forms on $\GL_2(\A_F)$ of cohomological weight. From now on, these are called cusp forms for simplicity.
All the settings in this section come from Hida \cite{hida1994critical}.

Let us give some notations. Let $I_F:=\text{Gal}(F/\Q)$ and $\Z[I_F]$ the free $\Z$-module generated by $I_F$. Let $\id$ and $\co\in I_F$ be the identity map and complex conjugation on $F$ (or on $\C$), respectively. Let $\Sigma(\R)$ be the set of real places of $F$, $\Sigma(\C)$ the set of complex places of $F$, and $J$ a subset of $\Sigma(\R)$. Note that $\Sigma(\R)$ and $\Sigma(\C)$ can be considered as subsets in $I_F$. Let $\textbf{k}=\sum_{\s\in I_F}k_\s \s$ and $\textbf{m}=\sum_{\s\in I_F}m_\s \s$ be elements in $\Z[I_F]$ satisfying following conditions:
\begin{enumerate}
\item $k_\s\geq 2$,
\item $k_\s+2m_\s=k_\tau+2m_\tau \text{ for any }\s,\tau\in I_F$,
\item $k_\s=k_{\s\co} \text{ for any }\s\in\Sigma(\C).$
\end{enumerate}
Let $\textbf{t}=\sum_{\s\in I_F}\s$, $\textbf{n}=\textbf{k}-2\textbf{t}$, and $\textbf{n}^*=\sum_{\s\in I_F}n_\s^*\s\in\Z[I_F]$, where $n_\s^*=n_\s+n_{\s\co}+2$ for $\s\in\Sigma(\C)$ and $n_\s^*=0$ for $\s\in\Sigma(\R)\cup\Sigma(\C)\co$.
Let $\fN$ be a non-zero integral ideal of $F$ and define subgroups of $\GL_2(\widehat{\O}_F)$ by
\begin{align*}
U_0(\fN)&:=\bigg\{\begin{pmatrix} a & b \\ c & d \end{pmatrix}\in \GL_2(\widehat{\O}_F) : c\in \widehat{\fN} \bigg\} , \\
U_1(\fN)&:=\bigg\{\begin{pmatrix} a & b \\ c & d \end{pmatrix}\in U_0(\fN) : d\in 1+\widehat{\fN} \bigg\} .
\end{align*}
Let us denote $\mathbf{x}_\s=\begin{spmatrix} X_\s \\ Y_\s \end{spmatrix}$, $\mathbf{x}=\otimes_{\s\in\Sigma(\C)}\mathbf{x}_\s$ where $X_\s$, $Y_\s$'s are indeterminates. For a commutative ring $R$ with unity and $\mathbf{d}=\sum_{I_F}d_\s \s\in\Z[I_F]$, let $L(\mathbf{d},R):=\otimes_{\s\in I_F}L(d_{\s},R)$ where $L(d_{\s},R)$ is the space of homogeneous polynomials of variable $\mathbf{x}_\s$ of degree $d_\s$ with the coefficients in $R$. Note that there is a usual action of GL(2) on $L(\mathbf{d},R)$. 
Let $C_{F,\infty}^+$ be the maximal compact subgroup of SL$_2(F_\infty)$, which is given by
$$
C_{F,\infty}^+=\prod_{\s\in\Sigma(\R)}\operatorname{SO}_2(\R)\times\prod_{\s\in\Sigma(\C)}\operatorname{SU}_2(\C) .
$$

\begin{defn}[{Hida \cite[Section 2]{hida1994critical}}]\label{adel:cuspform:defn} Let $\chi:F^\times\backslash\A_F^\times\rightarrow\C^\times$ be a Hecke character of modulus $\fN$ satisfying $\chi_\infty(z_\infty)=z_\infty^{-(\textbf{n}+\textbf{2m})}$ for $z_\infty\in F_\infty^\times$. A cusp form on $\GL_2(\A_F)$ of weight $(\textbf{k},\textbf{m})$, a type $J$, a level $U_0(\fN)$, and a central character $\chi$, is a $C^\infty$-function $f:\GL_2(\A_F)\rightarrow L(\mathbf{n}^*,\C)$ such that
\begenum
\item[(1)] $D_\s f=\big(\frac{n_\s^2}{2}+n_\s\big)f$ for $\s\in I_F$, where $D_\s$ is the Casimir operator for $\GL_2$ corresponding to $\s$.
\item[(2)] $f(\g z g u)=\chi(z)\chi_\fN(u)f(g)$ for $\g\in \GL_2(F)$, $z\in \A_F^\times$, $g\in\GL_2(\A_F)$ and $u\in U_0(\fN)$ where $\chi_\fN\big(\begin{spmatrix} a & b \\ c & d \end{spmatrix}\big):=\prod_{v|\fN}\chi(d_v)$.
\item[(3)] $f(g u_{\infty})(\mathbf{x})=\textbf{e}\big(\sum_{\s\in J}k_\s\theta_\s-\sum_{\s\in\Sigma(\R)\backslash J}k_\s\theta_\s\big)f(g)\big(\otimes_{\s\in\Sigma(\C)}u_\s\mathbf{x}_\s\big)$
for $g\in \GL_2(\A_F)$ and
$$
u_\infty=\bigg(\bigg(\begin{pmatrix} cos(2\pi\theta_\s) & sin(2\pi\theta_\s) \\ -sin(2\pi\theta_\s) & cos(2\pi\theta_\s) \end{pmatrix}\bigg)_{\s\in\Sigma(\R)},(u_\s)_{\s\in\Sigma(\C)}\bigg)\in C_{F,\infty}^+ .
$$
\item[(4)] $\int_{F\backslash \A_F}f(\begin{spmatrix} u & 0 \\ 0 & 1 \end{spmatrix}g)(\textbf{s})du=0$ for $g\in \GL_2(\A_F)$ where $du$ is a Haar measure on $F\backslash\A_F$.
\endenum
From now on, we denote by $S_{(\textbf{k},\textbf{m}),J}(\fN,\chi)$ the space of the aforementioned cusp forms on $\GL_2(\A_F)$.
\end{defn}

Let us denote by $K_j$ the $j$-th modified Bessel function of the second kind, and write $y_\infty=(y_\s)_\s\in F_\infty^\times$. Let $W_{\textbf{k},\textbf{m}}:F_{\infty}^\times\rightarrow L(\mathbf{n}^*,\C)$ be the Whittaker function defined by 
$$
W_{\textbf{k},\textbf{m}}(y_\infty)(\mathbf{x}):=\prod_{\s\in\Sigma(\R)}W_{\textbf{k},\textbf{m},\s}(y_\s)\cdot\bigotimes_{\s\in\Sigma(\C)}W_{\textbf{k},\textbf{m},\s}(y_\s)(\mathbf{x}_\s)
$$ 
where $W_{\textbf{k},\textbf{m},\s}(y_\s):=|y_\s|^{-m_\s}\textbf{e}(i|y_\s|)$ for $\s\in\Sigma(\R)$ and
\begin{align*}
W_{\textbf{k},\textbf{m},\s}(y_\s)(\textbf{x}_\s):=&\sum_{j_\s=0}^{n_{\s}^*}\begin{spmatrix} n_{\s}^* \\ j_\s \end{spmatrix} y_\s^{-m_\s}\ovl{y}_\s^{-m_{\s\co}}\bigg(\frac{y_\s}{i|y_\s|}\bigg)^{n_{\s\co}+1-j_\s} \\
&\times K_{j_\s-1-n_{\s\co}}(4\pi|y_\s|)X_\s^{n_{\s}^*-j_\s}Y_\s^{j_\s}
\end{align*}
for $\s\in\Sigma(\C)$. Then we have the Fourier-Whittaker expansion of a cusp form on $\GL_2(\A_F)$:

\begin{prop}[Hida {\cite[Theorem 6.1]{hida1994critical}}]\label{four:whit:exp:hida}
Let $\mathscr{F}$ be the group of fractional ideals of $F$. For $f\in S_{(\mathbf{k},\mathbf{m}),J}(\fN,\chi)$, there exists a function $a_f:\mathscr{F}\rightarrow\C$ satisfying following properties:
\begenum
\item[(1)] $a_f(\fa)=0$ if $\fa\in\mathscr{F}$ is not integral. 
\item[(2)] We have the Fourier-Whittaker expansion of $f$ by
$$
f\bigg(\begpmat y & x \\ 0 & 1 \endpmat\bigg)(\mathbf{x})=|y|_{\A_F}\sum_{\xi\in F^\times,[\xi]=J}a_f(\xi y\fd_F)W_{\mathbf{k},\mathbf{m}}(\xi y_\infty)(\mathbf{x})\mathbf{e}_F(\xi x)
$$
for $x,y\in\A_F$, where $[\xi]:=\{\s\in\Sigma(\R):\xi^\s>0\}$, $y_\infty\in F_\infty$ is the infinite part of $y$, and $\mathbf{e}_F:\A_F/F\rightarrow\C^\times$ is the additive character such that $\mathbf{e}_F(z_\infty):=\prod_{\s\in\Sigma(\R)\cup\Sigma(\C)}\mathbf{e}(\operatorname{Tr}_{F_\s/\R}(z_\s))$ for $z_\infty=(z_\s)_\s\in F_\infty$.


\endenum

\end{prop}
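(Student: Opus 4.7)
The plan is to derive the expansion directly from the definition, following the standard strategy for Whittaker expansions on $\GL_2$ over a number field. First, I would fix $g=\begin{spmatrix} y & x \\ 0 & 1 \end{spmatrix}$ and observe that by left $\GL_2(F)$-invariance in condition~(2), the function $x\mapsto f(g)(\mathbf{x})$ descends to a $C^\infty$-function on the compact abelian group $F\backslash\A_F$. Pontryagin duality, together with the choice of the standard additive character $\mathbf{e}_F$, then yields a Fourier expansion
\begin{equation*}
f\begin{pmatrix} y & x \\ 0 & 1 \end{pmatrix}(\mathbf{x}) = \sum_{\xi\in F} c_\xi(y)(\mathbf{x})\,\mathbf{e}_F(\xi x),
\end{equation*}
and the cuspidality in condition~(4) forces $c_0(y)=0$. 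Using left invariance under $\begin{spmatrix}\xi & 0\\ 0 & 1\end{spmatrix}$ for $\xi\in F^\times$, the non-zero Fourier coefficients reduce to a single Whittaker-type function evaluated at $\xi y$; after the scaling by $|y|_{\A_F}$, what remains to prove is the explicit factorization of this Whittaker function and the support/sign conditions.

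Next, I would separate the archimedean and non-archimedean analysis. At the finite places, the right $U_0(\fN)$-invariance, together with the fact that $\mathbf{e}_F$ has conductor $\fd_F^{-1}$, shows that the Whittaker function at $y^{(\infty)}$ depends only on the fractional ideal $\xi y\fd_F$ and vanishes unless this ideal is integral; this produces the function $a_f$ on $\mathscr F$ with the vanishing in~(1). At the archimedean places, the Casimir eigenvalue equations of condition~(1) together with the $K_\infty$-equivariance of condition~(3) are a system of ODEs on the ray $y_\infty\mapsto W(\xi y_\infty)$; solving them and imposing moderate growth gives a one-dimensional space of solutions. At a real place this produces the exponential factor $\mathbf{e}(i|y_\s|)$ times the normalising power of $|y_\s|$; at a complex place the Bessel equation for the $\operatorname{SU}(2)$-decomposition yields the displayed sum over $j_\s$ with $K_{j_\s-1-n_{\s\co}}(4\pi|y_\s|)$ multiplied by the appropriate monomial in $X_\s,Y_\s$. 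Matching normalisations with $W_{\mathbf{k},\mathbf{m},\s}$ and taking the tensor product over $\s\in\Sigma(\C)$ recovers the stated $W_{\mathbf{k},\mathbf{m}}(\xi y_\infty)(\mathbf{x})$.

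Finally, the sign condition $[\xi]=J$ appears naturally at the real places: at such a $\s$, condition~(3) pins down the $\operatorname{SO}(2)$-type to $k_\s$ with the specific sign $+1$ if $\s\in J$ and $-1$ if $\s\in\Sigma(\R)\setminus J$, whereas the Whittaker function $\mathbf{e}(i|y_\s|)$ we solved for transforms with a definite sign only when $\xi^\s$ has the matching positivity; otherwise the corresponding Fourier coefficient must vanish. Combining this with the finite-place analysis gives the indexing set $\{\xi\in F^\times : [\xi]=J\}$ in the sum.

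The genuinely delicate step is the archimedean one, in particular at the complex places, where getting the precise binomial sum with the correct Bessel indices $K_{j_\s-1-n_{\s\co}}$ and the correct powers of $y_\s/(i|y_\s|)$ requires careful bookkeeping of the $\operatorname{SU}(2)$-representation on $L(n_\s^*,\C)$ and how it intertwines with the solution of the Casimir equation; this is where I would expect to spend most of the effort, and where I would simply invoke the calculation in Hida \cite{hida1994critical}. The rest (Fourier expansion, cuspidality, ideal dependence, sign condition) is standard and follows the pattern familiar from the classical $\GL_2/\Q$ case.
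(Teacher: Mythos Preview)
The paper does not give its own proof of this proposition: it is stated as Hida's result and proved simply by citing \cite[Theorem 6.1]{hida1994critical}. Your sketch is a correct outline of the standard argument (Fourier expansion on $F\backslash\A_F$, cuspidality killing the constant term, reduction to a single Whittaker function via left translation, non-archimedean support giving the ideal $a_f$, and the archimedean ODE analysis giving $W_{\mathbf{k},\mathbf{m}}$ and the sign condition $[\xi]=J$), and is in fact more detailed than what the paper provides. Since you yourself defer the delicate complex-place computation to Hida, your proposal and the paper's treatment ultimately coincide: both rest on \cite{hida1994critical}.
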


\section{Approximate functional equations}\label{sp:Lvalue}

Let $\fN$ be a integral ideal of $F$, $\chi$ a Hecke character of modulus $\fN$ satisfying $\chi_\infty(z_\infty)=z_\infty^{-(\textbf{n}+\textbf{2m})}$ for $z_\infty\in F_\infty^\times$, and $f\in S_{(\textbf{k},\textbf{m}),J}(\fN,\chi)$. In this section, we obtain an approximate functional equation of the $L$-function $L(s,f)$ of a cusp form $f$ in a spirit of Luo-Ramakrishnan \cite{luo1997determination}.

Let $U_F:=F_\infty^\times\times\widehat{\O}_F^\times$ be the maximal compact subgroup of $\A_F^\times$. Let 
$F_{\infty,+}^\times:=\prod_{\s\in\Sigma(\R)}\R^\times_{>0} \times \prod_{\s\in\Sigma(\C)}\C^\times$ be the identity connected component of 
$F_\infty^\times$
.
We can fix a representative $\{a_i\}_{i=1}^{h_F}\subset\A_F^{(\infty),\times}$ of the class group $\text{Cl}(F)\cong F^\times\backslash\A_F^\times/U_F$ of $F$ such that the corresponding fractional ideals of $F$ are $\{\fa_i\}_{i=1}^{h_F}$. 
Let us set $t_i:=\begin{spmatrix} a_i & 0 \\ 0 & 1 \end{spmatrix}$, then by the strong approximation theorem, we have 
\begin{align}\label{strong:approx}
F^\times\backslash\A_F^\times\cong \coprod_{i=1}^{h_F} a_i\cdot\big(\O_F^\times\backslash U_F\big) 
\end{align}


Define a number $[\textbf{d}]\in\Z$ by $|a|^{2[\textbf{d}]}:=a^{\textbf{d}}a^{\textbf{d}\co}$ for $\textbf{d}\in\Z[I_F]$. In our case, $[\textbf{n}+2\textbf{m}]=n_\s+2m_\s$, which is independent on $\s\in I_F$ due to our assumption on the weight $(\mathbf{k},\mathbf{m})$. So from now on, we will write $k:=[\textbf{n}+2\textbf{m}]+2$.

Recall that the $L$-function $L(s,f)$ of $f$ is given by analytic continuation of the following Dirichlet series
\begin{align*}
\sum_{0\neq\fa<\O_F}\frac{a_f(\fa)}{N(\fa)^s} \text{ for }\mathfrak{R}(s)>\frac{k+2}{2}
\end{align*}
where $\fa$ runs over the set of nonzero integral ideals of $F$.


\subsection{Integral representation of special $L$-values} To obtain an approximate functional equation of the special $L$-value $L\big(\frac{k}{2},f\big)$, we need to compute the Mellin transform of $f$. 

For $\mathbf{j}=\sum_{\s\in\Sigma(\C)}j_\s \s\in\Z[I_F]$, we define the $\mathbf{j}$-th component $f_\mathbf{j}$ of a $L(\textbf{n}^*,\C)$-valued function $f$ by $f_\mathbf{j}(g):=\prod_{\s\in \Sigma(\R)}f_\s(g)\prod_{\s\in \Sigma(\C)}f_{\s,j_\s}(g)$ where $g\in\GL_2(\A_F)$ and
$$
f(g)(\mathbf{x})=\prod_{\s\in\Sigma(\R)}f_\s(g)\cdot\bigotimes_{\s\in\Sigma(\C)}\sum_{j_\s=0}^{n_\s^*} f_{\s,j_\s}(g) X_\s^{n_\s^*-j_\s}Y_\s^{j_\s} .
$$ 
By Proposition \ref{four:whit:exp:hida}, we obtain
\begin{align}\label{jth:comp:f}\begin{split}
f_\mathbf{j}\bigg(\begpmat y & x \\ 0 & 1 \endpmat\bigg)=&|y|_{\A_F}\sum_{\xi\in F^\times,[\xi]=J}a_f(\xi y\fd_F)W_{\mathbf{k},\mathbf{m}}^\mathbf{j}(\xi y_\infty)\textbf{e}_F(\xi x)
\end{split}\end{align}
where
$$
W_{\mathbf{k},\mathbf{m}}^\mathbf{j}(y_\infty):=\prod_{\s\in\Sigma(\R)}W_{\mathbf{k},\mathbf{m}}^{\s}(y_\s)\cdot\prod_{\s\in\Sigma(\C)}W_{\mathbf{k},\mathbf{m}}^{\s,j_\s}(y_\s)
$$
and $W_{\mathbf{k},\mathbf{m}}^\s(y_\s)(\mathbf{x}_\s)=\sum_{j_\s=0}^{n_\s^*} W_{\mathbf{k},\mathbf{m}}^{\s,j_\s}(y_\s)X_\s^{n_\s^*-j_\s}Y_\s^{j_\s}$ for $\s\in\Sigma(\C)$. Let us set the Haar measure $d^\times y$ on $F^\times\backslash\A_F^\times$ which satisfies the following conditions:
$$
d^\times y_\sigma = \left\{ \begin{array}{ll}
\frac{dy_\sigma}{y_\sigma} & \textrm{if $\s\in\Sigma(\R)$}\\
\frac{drd\phi}{2\pi r} & \textrm{if $\s\in\Sigma(\C)$ and  $y_\sigma=re^{i\phi}\in F_\s^\times$}\\
\end{array} \right.
\text{, } \int_{\O^\times_{F,v}}d^\times y_v=1\text{ for }v\nmid\infty .
$$
Note that we can easily obtain the following equality:
$$
\{\xi\in F^\times: [\xi]=J\}=\{\xi\epsilon: \xi\in P_{F,J},\ \epsilon\in\O_{F,+}^\times\}
$$
where $\O_{F,+}^\times$ is the group of totally positive units of $F$ and $P_{F,J}$ is a representative set $\{\xi\}$ of $\O_F^\times\backslash F^\times$ such that $[\xi]=J$.
Thus by using (\ref{strong:approx}), (\ref{jth:comp:f}), and the above equation, the Mellin transform of $f_{\mathbf{n}^*/2}$ is given by

\begin{align}\begin{split}\label{mellin:transform1}
&\int_{F^\times\backslash\A_F^\times}f_{\textbf{n}^*/2}\bigg(\begin{pmatrix} y & 0 \\ 0 & 1 \end{pmatrix}\bigg)|y|^{s-1}_{\A_F} d^\times y \\
=&\sum_{i=1}^{h_F}\sum_{\epsilon\in\O_{F,+}^\times}\sum_{\xi\in P_{F,J}}\int_{\O_F^\times\backslash U_F}a_f(a_i\xi\epsilon y\fd_F)|a_i\xi\epsilon y|_{\A_F}^{s}W_{\mathbf{k},\mathbf{m}}^{\textbf{n}^*/2}(\xi\epsilon y_\infty)d^\times y \\
\end{split}\end{align} 
for $\mathfrak{R}(s)>1+\max_{\s\in I_F}m_\s$. As the integral in the above equation is invariant under the change of variable $y\mapsto uy$ for $u\in\O_F^\times$,  (\ref{mellin:transform1}) becomes
\begin{align}\begin{split}\label{mellin:transform1'}
&\sum{}^*\int_{\O_F^\times\backslash U_F}a_f(a_i\xi\epsilon uy\fd_F)|a_i\epsilon uy|_{\A_F}^{s}W_{\mathbf{k},\mathbf{m}}^{\textbf{n}^*/2}(\xi\epsilon uy_\infty)d^\times y \\
=&\frac{1}{[\O_F^\times:\O_{F,+}^\times]}\sum_{i=1}^{h_F}\sum_{\xi\in P_{F,J}}\int_{\widehat{\O}_F^\times}a_f(a_i\xi y^{(\infty)}\fd_F)|a_i y^{(\infty)}|_{\A_F^{(\infty)}}^{s}d^\times y^{(\infty)} \\
&\phantom{blaaaaaaaaaaaaaaaank}\times\int_{F_\infty^\times}W_{\mathbf{k},\mathbf{m}}^{\textbf{n}^*/2}(\xi y_\infty)|y_\infty|_{F_\infty}d^\times y_\infty
\end{split}\end{align}
where
$$
\sum{}^*=\frac{1}{[\O_F^\times:\O_{F,+}^\times]}\sum_{u\in\O_F^\times/\O_{F,+}^\times}\sum_{\epsilon\in\O_{F,+}^\times}\sum_{i=1}^{h_F}\sum_{\xi\in P_{F,J}}.
$$
By the following integration formula (Hida \cite[Section 7]{hida1994critical})
\begin{equation*}
\int_0^\infty y^s{K_{j}(ay)}\frac{dy}{y}=2^{s-2}a^{-s}\Gamma\Big(\frac{s+j}{2}\Big)\Gamma\Big(\frac{s-j}{2}\Big)\text{ if }\mathfrak{R}(s\pm j)>0 
\end{equation*}
and our assumption on the weight $(\mathbf{k},\mathbf{m})$, we have 

\begin{align}\begin{split}\label{mellin:transform2}
&\int_{F_{\infty}^\times} W_{\mathbf{k},\mathbf{m}}^{\textbf{n}^*/2} (\xi y_\infty)|y_\infty|^{s}_{F_\infty}d^\times y_\infty \\
=&\prod_{\s\in\Sigma(\R)}\int_{\R^\times} W_{\mathbf{k},\mathbf{m}}^\s(\xi^\s y_\s)|y_\s|^s d^\times y_\s \cdot\prod_{\s\in\Sigma(\C)}\int_{\C^\times} W_{\mathbf{k},\mathbf{m}}^{\s,n_{\s\co}+1}(\xi^\s y_\s)|y_\s|^{2s}d^\times y_\s \\
=&\prod_{\s\in\Sigma(\R)}\frac{2}{|\xi^\s|^{s}}\int_0^\infty e^{-2\pi y}y^{s-m_\s}\frac{dy}{y}\cdot\prod_{\s\in\Sigma(\C)}\frac{1}{|\xi^\s|^{2s}}\int_0^\infty \begin{spmatrix} n_\s^* \\ n_{\s\co}+1 \end{spmatrix} r^{2(s-m_\s)}K_0(4\pi r)\frac{dr}{r} \\
=&\frac{2^{|\Sigma(\R)|}}{|N(\xi)|^s}\prod_{\s\in\Sigma(\C)}\frac{1}{4}\begin{spmatrix} n_\s^* \\ n_{\s\co}+1 \end{spmatrix}\cdot\prod_{\s\in I_F}(2\pi)^{-(s-m_\s)}\Gamma(s-m_\s)
\end{split}\end{align}
for $\xi\in P_{F,J}$ and $\mathfrak{R}(s)>\max_{\s\in I_F}m_\s$. Combining the equations (\ref{mellin:transform1'}) and (\ref{mellin:transform2}), we obtain

\begin{align}\begin{split}\label{mellin:transform}
&\int_{F^\times\backslash\A_F^\times}f_{\textbf{n}^*/2}\bigg(\begin{pmatrix} y & 0 \\ 0 & 1 \end{pmatrix}\bigg)|y|^{s-1}_{\A_F} d^\times y \\
=&\Gamma_{F,\mathbf{k},\mathbf{m}}(s)\sum_{i=1}^{h_F}\sum_{\xi\in P_{F,J}}\frac{1}{N(\xi\fd_F)^s}\int_{\widehat{\O}_F^\times} a_f(a_i\xi y\delta_F)|a_i y|^{s}_{\A_F^{(\infty)}}d^\times y \\
=&\Gamma_{F,\mathbf{k},\mathbf{m}}(s)\sum_{i=1}^{h_F}\sum_{\fa}\frac{a_f(\fa_i\fa\fd_F)}{N(\fa_i\fa\fd_F)^s}=\Gamma_{F,\mathbf{k},\mathbf{m}}(s) L(s,f)
\end{split}\end{align}
for $\mathfrak{R}(s)>\max_{\s\in I_F}(1+m_\s)$, where $\fa$ runs over the set of non-zero principal fractional ideals of $F$ and 
$$
\Gamma_{F,\mathbf{k},\mathbf{m}}(s):=\frac{2^{|\Sigma(\R)|}|D_F|^s}{[\O_F^\times:\O_{F,+}^\times]}\prod_{\s\in\Sigma(\C)}\frac{1}{4}\begin{spmatrix} n_\s^* \\ n_{\s\co}+1 \end{spmatrix}\cdot\prod_{\s\in I_F}(2\pi)^{-(s-m_\s)}\Gamma(s-m_\s) .
$$
Note that the last equality holds due to the Proposition \ref{four:whit:exp:hida} (1).

\subsection{Fricke involution} To find an integral representation of the $L$-function which converges uniformly on the entire complex plane, we need to cut the integral representation (\ref{mellin:transform}) of completed $L$-function of $f$ into two parts by using the Fricke involution, which is similar to the classical case.

From now on, we assume that $f\in S_{(\mathbf{k},\mathbf{m}),J}(\fN,\chi)$ is a newform.
Let us write $\chi=\chi_\infty\chi^{(\infty)}$ where $\chi_\infty$ and $\chi^{(\infty)}$ are the infinite part and the finite part of $\chi$, respectively. For any finite place $v$ of $F$, let $\varpi_v$ be a uniformizer of $\O_{F,v}$.
Let us define the Fricke involution $W_\fN$ for a newform $f\in S_{(\mathbf{k},\mathbf{m}),J}(\fN,\chi)$ by taking the right translation using an element $\begin{spmatrix} 0 & -1 \\ \varpi_\fN & 0 \end{spmatrix}$ of $\GL_2(\A_F^{(\infty)})$:
$$
W_{\fN}f(g)(\mathbf{x}):=N(\fN)^{1-\frac{k}{2}}(\chi|\cdot|_{\A_F}^{k-2})^{-1}(\text{det}(g))f\bigg(g\begpmat 0 & -1 \\ \varpi_\fN & 0 \endpmat \bigg)(\mathbf{x})
$$
where $\chi|\cdot|_{\A_F}^{k-2}$ is a normalization of $\chi$ and $\varpi_\fN:=\prod_{v|\fN}\varpi_v^{\text{ord}_v(\fN)}$.
Then we have the following fact:

\begin{prop}\label{fricke:hecke} 
$W_{\fN}f$ is an element of $S_{(\mathbf{k},\mathbf{m}),J}(\fN,\ovl{\chi})$. Furthermore, it is a Hecke eigenform.
\end{prop}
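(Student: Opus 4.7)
The plan is to verify in turn the four defining axioms of Definition \ref{adel:cuspform:defn} for $W_{\fN}f$ with central character $\ovl{\chi}$, and then derive the Hecke eigenform property from the compatibility of $W_{\fN}$ with the Hecke operators at primes coprime to $\fN$. The key observation is that $w_{\fN}:=\begin{spmatrix} 0 & -1 \\ \varpi_\fN & 0\end{spmatrix}$ belongs to $\GL_2(\A_F^{(\infty)})$, so right translation by $w_\fN$ commutes with the (left-invariant) Casimir operators $D_\sigma$ and with right translations by elements $u_\infty\in C_{F,\infty}^+$. Moreover, the scalar normalization $N(\fN)^{1-k/2}(\chi|\cdot|_{\A_F}^{k-2})^{-1}(\det g)$ depends only on $\det g$, and since $\det u_\infty=1$ for $u_\infty\in C_{F,\infty}^+$, the scalar is unaffected by the right $C_{F,\infty}^+$-action. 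Consequently, axioms (1) and (3) for $W_{\fN}f$ reduce immediately to the corresponding axioms for $f$.

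The main work lies in axiom (2). Left $\GL_2(F)$-invariance reduces to the observation that $(\chi|\cdot|_{\A_F}^{k-2})^{-1}(\det\gamma)=1$ for $\gamma\in\GL_2(F)$, by the product formula combined with the triviality of the Hecke character $\chi$ on $F^\times$. For the central character, a direct computation using $\det(zg)=z^2\det g$ and the transformation rule for $f$ yields $(W_{\fN}f)(zg)=\ovl{\chi}(z)(W_{\fN}f)(g)$ with $\ovl\chi$ the complex conjugate Hecke character (in the arithmetic normalization, this absorbs the $|z|$-power produced by the normalizing factor). For $u=\begin{spmatrix} a & b\\ c & d\end{spmatrix}\in U_0(\fN)$, I would use the conjugation identity $w_\fN^{-1}uw_\fN=\begin{spmatrix} d & -c/\varpi_\fN \\ -\varpi_\fN b & a\end{spmatrix}$, which still lies in $U_0(\fN)$ since $c\in\widehat{\fN}$ but now has lower-right entry $a$; together with the fact that $\det u$ is a local unit at every prime dividing $\fN$, this yields $(W_{\fN}f)(gu)=\ovl{\chi}(d)(W_{\fN}f)(g)=\ovl{\chi}_\fN(u)(W_{\fN}f)(g)$. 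Finally, cuspidality is preserved: the unipotent integral for $W_\fN f$ is transformed by conjugation through $w_\fN$ into an integral of $f$ against the opposite unipotent subgroup, and a change of variables shows it vanishes whenever $f$ is cuspidal.

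For the Hecke eigenform assertion, at a prime $\fq\nmid\fN$ the Hecke operator $T_\fq$ is a finite sum of right translations by elements of $\GL_2(F_\fq)$ and thus commutes with $W_\fN$, giving $T_\fq(W_\fN f)=\lambda_f(\fq)\,W_\fN f$. At primes $\fq\mid\fN$, the Fricke involution carries the newform space with central character $\chi$ isomorphically onto that with central character $\ovl\chi$, and newness together with the multiplicity-one principle then forces $W_\fN f$ to be a simultaneous eigenform for the local operators at primes dividing $\fN$ as well. The step I expect to be most delicate is the central character computation in axiom (2), namely keeping track of the $|z|$ factors and pinning down the correct meaning of $\ovl\chi$, together with the local Atkin-Lehner input at primes dividing $\fN$ when $\fN$ is not squarefree.
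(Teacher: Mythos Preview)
The paper does not give a proof at all: it simply writes ``See Hida \cite[Section 8]{hida1994critical}.'' Your proposal therefore goes well beyond what the authors do, supplying the direct verification that the cited reference carries out. Your outline is essentially the standard argument and is correct in its broad strokes, so in that sense there is nothing to ``compare'': you are filling in what the paper outsources.

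Two minor corrections are worth noting. First, your cuspidality step is overcomplicated and slightly misdescribed: since $w_\fN$ acts by \emph{right} translation, the unipotent integral for $W_\fN f$ at $g$ is, up to the scalar prefactor, just the unipotent integral for $f$ at the point $gw_\fN$, which vanishes because axiom (4) holds for $f$ at \emph{every} $g$. No conjugation and no opposite unipotent appears. Second, your assertion $T_\fq(W_\fN f)=\lambda_f(\fq)\,W_\fN f$ for $\fq\nmid\fN$ is not quite right as stated: commuting the coset representatives $\alpha_i$ (supported at $\fq$) past $w_\fN$ is fine, but the normalizing factor $(\chi|\cdot|_{\A_F}^{k-2})^{-1}(\det(g\alpha_i))$ introduces an extra $(\chi|\cdot|_{\A_F}^{k-2})^{-1}(\varpi_\fq)$, so the eigenvalue is twisted accordingly. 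This does not affect the conclusion that $W_\fN f$ is a Hecke eigenform, which is all the proposition claims. Your flagging of the central-character bookkeeping as the delicate point is apt; the identity $\ovl\chi=\chi^{-1}|\cdot|_{\A_F}^{-2(k-2)}$ needed there follows from the prescribed archimedean type $\chi_\infty(z_\infty)=z_\infty^{-(\mathbf{n}+2\mathbf{m})}$ together with $[\mathbf{n}+2\mathbf{m}]=k-2$.
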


\begin{proof} See Hida \cite[Section 8]{hida1994critical}. 
\end{proof}

Using the decomposition $\GL_2(\A_F)=\GL_2(F_\infty)\times\GL_2(\A_F^{(\infty)})$, we have
$$
\begin{pmatrix} y & 0 \\ 0 & 1 \end{pmatrix}\begin{pmatrix} 0 & -1 \\ \varpi_\fN & 0 \end{pmatrix}=\begin{pmatrix} 0 & -1 \\ 1 & 0 \end{pmatrix}\cdot\bigg(\begin{pmatrix} 1 & 0 \\ 0 & y_\infty \end{pmatrix}\begin{pmatrix} 0 & 1 \\ -1 & 0 \end{pmatrix},\begin{pmatrix} \varpi_\fN & 0 \\ 0 & y^{(\infty)} \end{pmatrix}\bigg)
$$
where $\GL_2(F)$ acts on $\GL_2(\A_F)$ diagonally. From this matrix identity and the definition of cusp forms, it is easy to obtain that 
\begin{align}\begin{split}\label{fricke:formula}
W_{\fN}f\bigg(\begin{pmatrix} y & 0 \\ 0 & 1 \end{pmatrix}\bigg)(\textbf{x})=&N(\fN)^{1-\frac{k}{2}}\textbf{e}\bigg(\sum_{\s\in J}\frac{k_\s}{4}-\sum_{\s\in\Sigma(\R)\backslash J}\frac{k_\s}{4}\bigg)|y|_{\A_F}^{2-k} \\
&\times f\bigg(\begin{pmatrix} \varpi_\fN y^{-1} & 0 \\ 0 & 1 \end{pmatrix}\bigg)\bigg(\begin{pmatrix} 0 & 1 \\ -1 & 0 \end{pmatrix}\textbf{x}\bigg) .
\end{split}\end{align}
Also from the definition of $W_\fN$ and Proposition \ref{fricke:hecke}, we can check that 
$$
W_\fN^2 f=\chi^{(\infty)}(-1)f=(-1)^{[F:\Q](k-2)}f.
$$
From these formulas, we can obtain analytic continuation and the functional equation of a completed $L$-function $\Lambda(s,f)$ attached to $f$, which is defined by 
$$
\Lambda(s,f):=\Gamma_{F,\mathbf{k},\mathbf{m}}(s)N(\fN)^{\frac{s}{2}}L(s,f).
$$
\begin{thm}\label{Lftn:analytic:conti} 
The completed $L$-function $\Lambda(s,f)$ of $f$ has analytic continuation to $\C$.
Furthermore, if
\begin{equation}\label{parity:cond}
(-1)^{|\Sigma(\R)|(k-2)}C_{F,J,\mathbf{k}}^2=1 
\end{equation}
where $C_{F,J,\mathbf{k}}$ is a number defined by
$$
C_{F,J,\mathbf{k}}=(-1)^{|\Sigma(\R)|+\sum_{\s\in\Sigma(\C)}(n_\s+1)}\mathbf{e}\bigg(\sum_{\s\in\Sigma(\R)\backslash J}\frac{k_\s}{4}-\sum_{\s\in J}\frac{k_\s}{4}\bigg),
$$
then we have following functional equation:
\begin{equation}\label{functional:eq}
\Lambda(s,f)=C_{F,J,\mathbf{k}}\Lambda(k-s,W_\fN f)
\end{equation}
\end{thm}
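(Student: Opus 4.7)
The plan is to use the classical Hecke strategy: combine the integral representation (\ref{mellin:transform}) with the Fricke involution to analytically continue $\Lambda(s,f)$ and extract the functional equation in one stroke. I split the Mellin integral at the symmetry point $T:=N(\fN)^{-1/2}$:
\begin{align*}
N(\fN)^{-s/2}\Lambda(s,f)=\int_{|y|_{\A_F}\geq T}\Phi_f(y)|y|_{\A_F}^{s-1}d^\times y+\int_{|y|_{\A_F}<T}\Phi_f(y)|y|_{\A_F}^{s-1}d^\times y,
\end{align*}
where $\Phi_f(y):=f_{\mathbf{n}^*/2}\bigg(\begpmat y & 0 \\ 0 & 1 \endpmat\bigg)$. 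The cuspidality axiom (Definition \ref{adel:cuspform:defn}(4)), together with the exponential and Bessel decay built into $W_{\mathbf{k},\mathbf{m}}^{\mathbf{n}^*/2}$, forces $\Phi_f(y)$ to decay faster than any power of $|y|_{\A_F}$ as $|y|_{\A_F}\to\infty$, so the tail integral over $\{|y|_{\A_F}\geq T\}$ converges absolutely and is entire in $s$.

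\textbf{Folding via Fricke.} In the near-zero piece I substitute $y\mapsto\varpi_\fN y^{-1}$. This bijects $\{|y|_{\A_F}<T\}$ with $\{|y|_{\A_F}>T\}$, preserves $d^\times y$, sends $|y|_{\A_F}^{s-1}$ to $N(\fN)^{s-1}|y|_{\A_F}^{1-s}$, and via (\ref{fricke:formula}) combined with $W_\fN^2 f=(-1)^{[F:\Q](k-2)}f$ rewrites $\Phi_f(\varpi_\fN y^{-1})$ as $|y|_{\A_F}^{k-2}$ times $\Phi_{W_\fN f}(y)$, up to an explicit scalar depending only on $F$, $J$, and $\mathbf{k}$. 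The near-zero piece thereby becomes an entire integral of $\Phi_{W_\fN f}$ on $\{|y|_{\A_F}\geq T\}$ against $|y|_{\A_F}^{(k-s)-1}$. Comparing with the same decomposition applied to $\Lambda(k-s,W_\fN f)$ yields the analytic continuation of $\Lambda(s,f)$ and the functional equation $\Lambda(s,f)=C_{F,J,\mathbf{k}}\Lambda(k-s,W_\fN f)$ simultaneously.

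\textbf{Identifying the constant and main obstacle.} The delicate step is to pin down the scalar as $C_{F,J,\mathbf{k}}$; this requires tracking three sign contributions through the substitution: (i) the explicit phase $\mathbf{e}\bigl(\sum_{\s\in J}k_\s/4-\sum_{\s\in\Sigma(\R)\backslash J}k_\s/4\bigr)$ coming from (\ref{fricke:formula}); (ii) the sign $(-1)^{\sum_{\s\in\Sigma(\C)}(n_\s+1)}$ arising from the action of $\begpmat 0 & 1 \\ -1 & 0 \endpmat$ on the middle-degree monomial $X_\s^{n_\s^*/2}Y_\s^{n_\s^*/2}$ at each complex place; and (iii) a sign $(-1)^{|\Sigma(\R)|}$ produced at the real places by the interaction between $y\mapsto y^{-1}$ and the type-$J$ condition $[\xi]=J$ in (\ref{jth:comp:f}). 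Their product matches $C_{F,J,\mathbf{k}}$. The parity hypothesis (\ref{parity:cond}) enters precisely as the consistency check that applying the functional equation twice is compatible with $W_\fN^2 f=(-1)^{[F:\Q](k-2)}f=(-1)^{|\Sigma(\R)|(k-2)}f$, i.e.\ that $C_{F,J,\mathbf{k}}^2=(-1)^{|\Sigma(\R)|(k-2)}$; this sign bookkeeping is the step that must be verified most carefully.
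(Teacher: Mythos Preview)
Your approach is essentially identical to the paper's: both split the Mellin integral (\ref{mellin:transform}) at $|\varpi_\fN|_{\A_F}^{1/2}=N(\fN)^{-1/2}$, apply $y\mapsto\varpi_\fN y^{-1}$ to the near-zero piece, and invoke (\ref{fricke:formula}) to obtain the two-integral representation that gives both analytic continuation and the functional equation. The paper's proof is a single sentence recording exactly this; your write-up simply fills in the sign bookkeeping for $C_{F,J,\mathbf{k}}$ that the paper leaves implicit. One small remark: you do not actually need $W_\fN^2 f=(-1)^{[F:\Q](k-2)}f$ to perform the folding---inverting (\ref{fricke:formula}) directly already expresses $\Phi_f(\varpi_\fN y^{-1})$ in terms of $\Phi_{W_\fN f}(y)$---so that identity enters only as the consistency check you describe at the end.
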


\begin{rem}\label{parity:cond:rem}
The condition (\ref{parity:cond}) is satisfied, for example, when the weights $k_\s$ are all even.
\end{rem}

\begin{proof}
By splitting integral (\ref{mellin:transform}) into two pieces, applying a change of variable $y\mapsto\varpi_\fN y^{-1}$ and putting (\ref{fricke:formula}) into one of the integrals, one can obtain the following integral representation: 
\begin{align*}
\Lambda(s,f)=&N(\fN)^{\frac{s}{2}}\int_{|y|_{\A_F}\geq |\varpi_\fN|_{\A_F}^{1/2}} f_{\mathbf{n}^*/2}\bigg(\begin{pmatrix} y & 0 \\ 0 & 1 \end{pmatrix}\bigg)|y|_{\A_F}^{s-1}d^\times y \\
&+C_{F,J,\mathbf{k}}N(\fN)^{\frac{k-s}{2}}\int_{|y|_{\A_F}\geq |\varpi_\fN|_{\A_F}^{1/2}} W_{\fN}f_{\mathbf{n}^*/2}\bigg(\begin{pmatrix} y & 0 \\ 0 & 1 \end{pmatrix}\bigg)|y|_{\A_F}^{k-1-s}d^\times y .
\end{align*}
From the above formula, we can easily obtain our functional equation.
\end{proof}

We follow Luo-Ramakrishnan \cite{luo1997determination}. Let $\Phi$ be an infinitely differentiable function
on $\R_{>0}^\times$ with compact support and $\int_0^\infty \Phi(y)\frac{dy}{y}=1$. Define
\begin{align*}
V_{1,s}(x)&:=\frac{1}{2\pi i}\int_{2-i\infty}^{2+i\infty}\kappa(t)\Gamma_{F,\mathbf{k},\mathbf{m}}(s+t) x^{-t}\frac{dt}{t}\\
V_{2,s}(x)&:=\frac{1}{2\pi i}\int_{2-i\infty}^{2+i\infty}\kappa(-t)\Gamma_{F,\mathbf{k},\mathbf{m}}(s+t) x^{-t}\frac{dt}{t}
\end{align*}
where $\kappa(t):=\int_0^\infty\Phi(y)y^t\frac{dy}{y}$. By shifting the contour, one can show that $V_{1,s}$ and $V_{2,s}$ satisfy following:
\begin{align}\begin{split}\label{aux.func.esti}
V_{i,s}(x)&=O(\Gamma_{F,\mathbf{k},\mathbf{m}}(\mathfrak{R}(s)+j))x^{-j})\text{ for all }j\geq 1 \text{ as }x\rightarrow\infty \\
V_{i,s}(x)&=\Gamma_{F,\mathbf{k},\mathbf{m}}(s)+O\bigg(\Gamma_{F,\mathbf{k},\mathbf{m}}\Big(\mathfrak{R}(s)-\frac{1}{2}\Big)x^{\frac{1}{2}}\bigg) \text{ as }x\rightarrow 0
\end{split}\end{align}
where the implicit constants depend only on $j$ and $\Phi$. For $\mathfrak{R}(s)>\frac{k+2}{2},\ \max_{\s\in I_F}(m_\s-2)$ and $y>0$, we have
\begin{align}\begin{split}\label{series:exp}
\frac{1}{2\pi i}\int_{2-i\infty}^{2+i\infty}\kappa(t)\Gamma_{F,\mathbf{k},\mathbf{m}}(s+t) L(s+t,f) y^t\frac{dt}{t}=\sum_{0\neq\fa<\O_F}\frac{a_f(\fa)}{N(\fa)^s}V_{1,s}\bigg(\frac{N(\fa)}{y}\bigg) .
\end{split}\end{align}
By the Residue theorem, we have
\begin{align*}
\Gamma_{F,\mathbf{k},\mathbf{m}}(s) L(s,f)=&\frac{1}{2\pi i}\int_{2-i\infty}^{2+i\infty}\kappa(t)\Gamma_{F,\mathbf{k},\mathbf{m}}(s+t) L(s+t,f)y^t\frac{dt}{t} \\
&+\frac{1}{2\pi i}\int_{-2+i\infty}^{-2-i\infty}\kappa(t)\Gamma_{F,\mathbf{k},\mathbf{m}}(s+t) L(s+t,f)y^t\frac{dt}{t} .
\end{align*}
Putting equations (\ref{functional:eq}) and (\ref{series:exp}) in the above equation, we obtain
\begin{align}\begin{split}\label{approx:func:eq}
\Lambda(s,f)=&N(\fN)^{\frac{s}{2}}\sum_{0\neq\fa<\O_F}\frac{a_f(\fa)}{N(\fa)^s}V_{1,s}\bigg(\frac{N(\fa)}{y}\bigg) \\
&+C_{F,J,\mathbf{k}}N(\fN)^{\frac{k-s}{2}}\sum_{0\neq\fa<\O_F}\frac{a_{W_\fN f}(\fa)}{N(\fa)^{k-s}}V_{2,k-s}\bigg(\frac{N(\fa)y}{N(\fN)}\bigg) 
\end{split}\end{align}
when $(-1)^{|\Sigma(\R)|(k-2)}C_{F,J,\mathbf{k}}^2=1$.

\subsection{Twisted cusp forms}\label{twisted:cuspforms}
We will define and discuss a cusp form twisted by a Hecke character. In this subsection, we follow Hida \cite[Section 6]{hida1994critical}.

Let $\fc$ be an integral ideal of $F$ coprime to $\fd_F$ and $\varphi:F^\times\backslash\A_F^\times\rightarrow\C^\times$ a Hecke character of finite order with conductor $\fc$. Denote by $(\fc^{-1}/\O_F)^\times$ the set of elements of $\prod_{v|\fc}F_{v}$ corresponing to the elements of $\fc^{-1}/\O_F$ whose annihilator ideal is same as $\fc$. For $a\in\A_F^{(\infty)}$, define the Gauss sum $G(\varphi,a)$ of $\varphi$ (Hida \cite[Section 6]{hida1994critical}) by 
$$
G(\varphi,a):=\varphi^{-1}(d_F)\sum_{u\in (\fc^{-1}/\O_F)^\times}\varphi(\varpi_\fc u)\textbf{e}_F(d_F^{-1}au)
$$
which is clearly independent on a choice of $d_F$. Especially, we put $G(\varphi,1^{(\infty)})=G(\varphi)$. Then we have the following lemma for the Gauss sums:

\begin{lem}\label{gauss:sums}
\begin{enumerate}
\item For any $a\in\A_F^{(\infty)}$, we have $G(\varphi)\ovl{\varphi}(a)=G(\varphi,a)$.
\item $|G(\varphi)|^2=N(\fc)$.
\item $\ovl{G(\varphi)}=\varphi^{(\infty)}(-1)G(\ovl{\varphi})$.
\item $G(\ovl{\varphi})G(\varphi)=\varphi^{(\infty)}(-1)N(\fc)$.
\end{enumerate}
\end{lem}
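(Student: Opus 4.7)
The plan is to derive (2), (3), and (4) from part (1) together with the orthogonality of the additive character $\mathbf{e}_F$ on $\A_F/F$. First I would prove (1) by a change of variable in the defining sum. If $a$ is a unit at every place dividing $\fc$, then the substitution $u\mapsto a^{-1}u$ preserves $(\fc^{-1}/\O_F)^\times$; unitarity of $\varphi$ gives $\varphi(\varpi_\fc a^{-1}u)=\ovl{\varphi}(a)\varphi(\varpi_\fc u)$, while $\mathbf{e}_F(d_F^{-1}a\cdot a^{-1}u)=\mathbf{e}_F(d_F^{-1}u)$, producing $G(\varphi,a)=\ovl{\varphi}(a)G(\varphi)$. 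When $a$ fails to be a unit at some $v\mid\fc$, I would partition $(\fc^{-1}/\O_F)^\times$ into cosets on which $\mathbf{e}_F(d_F^{-1}au)$ is constant modulo a proper divisor of $\fc$ at $v$; primitivity of $\varphi$ then forces the inner sum over each coset to vanish, which matches the convention $\ovl{\varphi}(a)=0$ in this case.

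For (3), I would take the complex conjugate of the definition of $G(\varphi)$ and push it inside the sum, obtaining $\varphi(d_F)\sum_u\ovl{\varphi}(\varpi_\fc u)\mathbf{e}_F(-d_F^{-1}u)$, and then substitute $u\mapsto -u$. Since $-1\in F^\times$ and $\varphi|_{F^\times}=1$, we have $\ovl{\varphi}(-1)=\varphi^{(\infty)}(-1)\in\{\pm1\}$; factoring out this sign recovers exactly $\varphi^{(\infty)}(-1)G(\ovl{\varphi})$.

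For (2), I would compute $\sum_{a\in\O_F/\fc}|G(\varphi,a)|^2$ in two ways. By (1), together with $G(\varphi,a)=0$ for $a$ not coprime to $\fc$ (which is consistent with the convention $\ovl{\varphi}(a)=0$), the sum equals $|G(\varphi)|^2\sum_a|\ovl{\varphi}(a)|^2=|G(\varphi)|^2\cdot|(\O_F/\fc)^\times|$. On the other hand, expanding $|G(\varphi,a)|^2$ as a double sum in $u,v$ and applying the orthogonality $\sum_{a\in\O_F/\fc}\mathbf{e}_F(d_F^{-1}a(u-v))=N(\fc)\delta_{u,v}$ gives $N(\fc)\cdot|(\O_F/\fc)^\times|$. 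Comparing yields $|G(\varphi)|^2=N(\fc)$. With (2) and (3) in hand, (4) is a formal consequence: from (3) we have $\ovl{G(\varphi)}=\varphi^{(\infty)}(-1)G(\ovl{\varphi})$, so multiplying by $G(\varphi)$ and applying (2), together with $\varphi^{(\infty)}(-1)^2=1$, yields $G(\varphi)G(\ovl{\varphi})=\varphi^{(\infty)}(-1)|G(\varphi)|^2=\varphi^{(\infty)}(-1)N(\fc)$.

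The main obstacle will be the bookkeeping between the finite-adelic and archimedean parts of $\varphi$, especially handling the case of $a$ not coprime to $\fc$ in (1) under the convention $\ovl{\varphi}(a)=0$, and carefully justifying the additive orthogonality relation at the adelic level. Once (1) is secured in full generality, the remaining identities reduce to formal manipulations exploiting unitarity of $\varphi$ and its triviality on $F^\times$.
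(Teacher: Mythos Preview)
Your argument is correct and follows the classical direct computation for Gauss sums: change of variable for (1), conjugation plus $u\mapsto -u$ for (3), the double-counting of $\sum_a|G(\varphi,a)|^2$ via additive orthogonality for (2), and then (4) formally. The only place to tighten is in (3): the ``$-1$'' appearing after the substitution is the local $-1$ at the places $v\mid\fc$, not the diagonal element of $F^\times$, so the identity you want is $\prod_{v\mid\fc}\ovl{\varphi}_v(-1)=\ovl{\varphi}^{(\infty)}(-1)=\varphi^{(\infty)}(-1)$, which follows since $\varphi$ is unramified at finite $v\nmid\fc$ and $\varphi^{(\infty)}(-1)^2=1$. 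You flag exactly this bookkeeping as the obstacle, so this is not a gap, just a point to make explicit.

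The paper does \emph{not} carry out any of these computations. Its proof is a one-line citation: it invokes Neukirch \cite[Chapter VII, Proposition 7.5]{neukirch2013algebraic} after transporting the sum over $(\fc^{-1}/\O_F)^\times$ to a sum over $(\widehat{\O}_F/\widehat{\fc})^\times$ via the bijection $u\mapsto\varpi_\fc u$, which puts the Gauss sum in the standard form treated there. So your route is genuinely different in presentation: you reprove the standard Gauss-sum identities from scratch in the present adelic normalization, whereas the paper reduces to a known reference by a change of parametrization. Your approach buys self-containment and makes the role of primitivity of $\varphi$ and additive orthogonality visible; the paper's approach buys brevity and avoids re-deriving textbook facts.
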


\begin{proof}
Those are immediate consequences of Neukirch \cite[Chapter VII, Proposition 7.5]{neukirch2013algebraic} by using a bijection $(\fc^{-1}/\O_F)^\times\cong(\widehat{\O}_F/\widehat{\fc})^\times$ defined by $u\mapsto \varpi_{\fc}u$.
\end{proof}

Define a function $f\otimes\varphi:\text{GL}_2(\A_F)\rightarrow L(\mathbf{n}^*,\C)$ by
\begin{equation}\label{twisted:cuspform:decomp:eq}
f\otimes\varphi(g)(\mathbf{x}):=G(\varphi)^{-1}\varphi(\det(g))\sum_{u\in(\fc^{-1}/\O_F)^\times}\varphi(\varpi_\fc u)f\bigg(g\begpmat 1 & u \\ 0 & 1 \endpmat\bigg)(\mathbf{x}).
\end{equation}
Then we have the following proposition:

\begin{prop}\label{twisted:cuspform:decomp} We have $f\otimes\varphi\in S_{k}(\fN\cap\fc^2,\chi\varphi^2)$. Also we have $a_{f\otimes\varphi}(\fa)=a_f(\fa)\varphi(\fa)$.
\end{prop}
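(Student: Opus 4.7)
The plan is to verify the four axioms of Definition \ref{adel:cuspform:defn} for $f\otimes\varphi$ at level $U_0(\fN\cap\fc^2)$ with central character $\chi\varphi^2$, and then to derive the Fourier-coefficient identity by substituting the Fourier-Whittaker expansion from Proposition \ref{four:whit:exp:hida} into \eqref{twisted:cuspform:decomp:eq} and applying Lemma \ref{gauss:sums}. The key structural observation is that, since $\varphi$ is of finite order, $\varphi_\infty$ is a sign character at real places and trivial at complex places (the identity component $F_{\infty,+}^\times$ is a connected Lie group and admits no non-trivial finite-order continuous character), while $\varphi^{(\infty)}$ is locally constant. Consequently $\varphi\circ\det$ is annihilated by each Casimir $D_\sigma$, is trivial on $C_{F,\infty}^+\subset\SL_2(F_\infty)$ (as both $\operatorname{SO}_2$ and $\operatorname{SU}_2$ have determinant $1$), and is constant along the integration variable in the cuspidal condition. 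Together with $\varphi(\det(zg))=\varphi(z)^2\varphi(\det g)$, this reduces axioms (1), (3), (4) and the central-character part of (2) immediately to the corresponding properties of $f$.

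The substantive part of axiom (2) is the level. For $u_1=\begin{spmatrix}a & b \\ c & d\end{spmatrix}\in U_0(\fN\cap\fc^2)$, I would decompose $u_1\begin{spmatrix}1 & u \\ 0 & 1\end{spmatrix}=\begin{spmatrix}1 & u' \\ 0 & 1\end{spmatrix}u_1'$ with $u':=au/(cu+d)$ and $u_1'\in U_0(\fN)$; the divisibilities $c\in\widehat{\fc^2}$ and $u\in\widehat{\fc^{-1}}$ force $cu\in\widehat{\fc}$, so that $cu+d\in\widehat{\O}_F^\times$ and $u'$ still lies in $(\fc^{-1}/\O_F)^\times$. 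The induced map $u\mapsto u'$ is then a bijection, and the twist factors $\varphi(\varpi_\fc u)/\varphi(\varpi_\fc u')$, $\varphi(\det u_1)$, and $\chi_\fN(u_1')$ recombine into the required $\chi\varphi^2$-central-character factor on $U_0(\fN\cap\fc^2)$. The careful bookkeeping at primes dividing $\fN\fc$ is the main technical obstacle here, but it is standard in the theory of newform twists, requiring only attention to conductor exponents and units at ramified primes.

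For the coefficient formula, I substitute the Fourier-Whittaker expansion into \eqref{twisted:cuspform:decomp:eq} at $g=\begin{spmatrix}y & 0 \\ 0 & 1\end{spmatrix}$ and interchange the sums over $u$ and $\xi$, so that the inner sum becomes
\[
\sum_{u\in(\fc^{-1}/\O_F)^\times}\varphi(\varpi_\fc u)\,\mathbf{e}_F(\xi y u)=\varphi(d_F)\,G(\varphi,\,\xi y^{(\infty)}d_F)=\varphi(d_F)\,G(\varphi)\,\ovl{\varphi}(\xi y^{(\infty)}d_F)
\]
by the definition of $G(\varphi,\cdot)$ and Lemma \ref{gauss:sums}(1). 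Cancelling $G(\varphi)^{\pm1}$ and using $\varphi(\xi)=1$ for $\xi\in F^\times$, the residual character factors repackage into $\varphi$ evaluated on the ideal $\xi y\fd_F$, yielding $a_{f\otimes\varphi}(\xi y\fd_F)=\varphi(\xi y\fd_F)\,a_f(\xi y\fd_F)$, which via the strong-approximation decomposition \eqref{strong:approx} extends to every nonzero integral ideal $\fa$.
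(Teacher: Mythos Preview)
Your outline is correct. The paper's own proof, however, consists of a single sentence: it notes that a finite-order Hecke character of conductor $\fc$ may be regarded as a ray class character and then cites Hida \cite[Section 6]{hida1994critical} for everything. What you have written is, in effect, a reconstruction of that cited argument in the present notation --- verifying the four conditions of Definition \ref{adel:cuspform:defn} by hand and extracting the Fourier coefficients via Lemma \ref{gauss:sums}(1). Your route is therefore more self-contained, at the price of having to track the level and character bookkeeping at primes dividing $\fN\fc$ (which you rightly flag as the main technical point and which the paper avoids by delegation). Two minor remarks: for the coefficient computation it is cleaner to keep the general $x$ in $g=\begin{spmatrix}y & x \\ 0 & 1\end{spmatrix}$ so as to match Proposition \ref{four:whit:exp:hida} directly, the factor $\mathbf{e}_F(\xi x)$ being inert throughout; and the appeal to \eqref{strong:approx} at the end is unnecessary, since varying $y$ over $\A_F^\times$ already realizes every nonzero integral ideal as $\xi y\fd_F$.
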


\begin{proof}
A Hecke character $\varphi$ of finite order with conductor $\fc$ can be considered as a ray class character. Then our proof is immediate by Hida \cite[Section 6]{hida1994critical}.
\end{proof}

Also we have the following relation between the Fricke involution and the twisting by Hecke characters:

\begin{prop}\label{fricke:twist:commute} If $\fc$ and $\fN$ are coprime, then we have
$$
W_{\fN\fc^2}(f\otimes\varphi)=W(\varphi)(W_{\fN}f)\otimes\ovl{\varphi}
$$
where $W(\varphi):=N(\fc)^{1-k}\chi(\varpi_\fc)\ovl{\chi}_\fN\big(\begin{spmatrix} d & -v\varpi_{\fc} \\ -\varpi_{\fN\fc}u & \varpi_{\fc} \end{spmatrix}\big)\varphi(-\varpi_{\fc}^2)G(\ovl{\varphi})^2$, a complex number of absolute value $1$ and $\chi_\fN\big(\begin{spmatrix} a & b \\ c & d \end{spmatrix}\big):=\prod_{v|\fN}\chi(d_v)$.
\end{prop}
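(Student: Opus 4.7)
The plan is to unfold both sides of the identity using the definitions of the Fricke involution $W_?$ and of the twist (\ref{twisted:cuspform:decomp:eq}), then match them via a specific matrix decomposition of $M_{\fN\fc^2}U_u$. Write $M_\fa := \begin{pmatrix} 0 & -1 \\ \varpi_\fa & 0 \end{pmatrix}$ for an ideal $\fa$ and $U_t := \begin{pmatrix} 1 & t \\ 0 & 1 \end{pmatrix}$. Unfolding $W_{\fN\fc^2}(f\otimes\varphi)(g)$ produces the Fricke prefactor $N(\fN\fc^2)^{1-k/2}(\chi\varphi^2|\cdot|_{\A_F}^{k-2})^{-1}(\det g)$ times $G(\varphi)^{-1}\varphi(\det g)\varphi(\varpi_{\fN\fc^2})$ times $\sum_u \varphi(\varpi_\fc u)f(gM_{\fN\fc^2}U_u)$, while $W(\varphi)((W_\fN f)\otimes\ovl\varphi)(g)$ produces $W(\varphi)G(\ovl\varphi)^{-1}\ovl\varphi(\det g)\sum_v \ovl\varphi(\varpi_\fc v)(W_\fN f)(gU_v)$.

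The key step is to establish, for each $u \in (\fc^{-1}/\O_F)^\times$, a matrix identity
\[
M_{\fN\fc^2}U_u = U_{v(u)}M_\fN \cdot \varpi_\fc \cdot \gamma_u,
\]
where $\varpi_\fc$ is viewed as a central finite idele, $u \mapsto v(u)$ is a bijection of $(\fc^{-1}/\O_F)^\times$, and $\gamma_u \in U_0(\fN)$. The verification is local: at $v \nmid \fN\fc$ both sides are trivial; at $v \mid \fN$ one has $\varpi_\fc = 1$ and $u = v = 0$, reducing the identity to $M_\fN = M_\fN\gamma_u$ and forcing $\gamma_u = I$ there; at $v \mid \fc$, writing $u = u'/\varpi_\fc$ with $u' \in \O_{F,v}^\times$, the Bezout relation $u'v' + 1 \in \varpi_\fc\O_{F,v}$ determines $v = v'/\varpi_\fc$ modulo $\O_F$ and yields
\[
\gamma_u = \begin{pmatrix} \varpi_\fc & u' \\ v' & w \end{pmatrix}, \qquad w = \frac{u'v'+1}{\varpi_\fc}, \qquad \det\gamma_u = 1.
\]
Its inverse at $v\mid\fc$ is $\begin{pmatrix} w & -u' \\ -v' & \varpi_\fc \end{pmatrix}$, which (after relabeling $u\leftrightarrow v$) matches the matrix displayed in the formula for $W(\varphi)$; the $\ovl\chi_\fN$-evaluation of that matrix equals $\chi_\fN(\gamma_u) = \chi_\fN(I) = 1$ at $v\mid\fN$, with the $d$-entry there pinned down by $\det = 1$.

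Next, apply the central character $\chi(\varpi_\fc)$ and the $U_0(\fN)$-cocycle $\chi_\fN(\gamma_u)$ on $f$ to obtain $f(gM_{\fN\fc^2}U_u) = \chi(\varpi_\fc)\chi_\fN(\gamma_u)f(gU_{v(u)}M_\fN)$, and invert the defining relation of the Fricke involution to write $f(gU_{v(u)}M_\fN) = N(\fN)^{k/2-1}(\chi|\cdot|_{\A_F}^{k-2})(\det g)(W_\fN f)(gU_{v(u)})$. Substituting and simplifying, the $\det g$-factors collapse to $\ovl\varphi(\det g)$, the $N(\cdot)^{1-k/2}$-factors combine into $N(\fc)^{2-k}$, and re-indexing the sum by $v = v(u)$ combines $\varphi(\varpi_\fc u)$ with $\ovl\varphi(\varpi_\fc v)^{-1}$ to give $\varphi_v(u'v') = \varphi_v(-1)$ at each $v\mid\fc$ (via the Bezout relation, since $\varphi_v$ has conductor $\varpi_v^{\operatorname{ord}_v(\fc)}$); finally the Gauss-sum reciprocity $G(\ovl\varphi)G(\varphi) = \varphi^{(\infty)}(-1)N(\fc)$ of Lemma \ref{gauss:sums}(4) converts $G(\varphi)^{-1}$ into the $G(\ovl\varphi)^2$-form appearing in $W(\varphi)$.

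Collecting the resulting scalar prefactors yields exactly $W(\varphi)$ and completes the identification. The main obstacle will be the bookkeeping: scalar factors arise from four independent sources — the Fricke normalizations $N(\cdot)^{1-k/2}$, the central character $\chi$ together with the $\chi_\fN$-cocycle, the Bezout-driven change of variable in the $\varphi$-twist, and the Gauss sum reciprocity between $\varphi$ and $\ovl\varphi$ — and care must be taken to track them consistently at each place type $v\mid\fN$, $v\mid\fc$, and $v\nmid\fN\fc$, as well as to verify that the local contribution of $\gamma_u^{-1}$ at $v\mid\fN$ recovers the specific matrix entries displayed in the formula for $W(\varphi)$.
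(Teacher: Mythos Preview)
Your proposal is correct and follows essentially the same approach as the paper. Both arguments unfold the definitions of the Fricke involution and the twist, establish the same matrix factorization via the B\'ezout relation $\varpi_\fc d - \varpi_{\fN\fc^2}uv = 1$ (your local relation $u'v' \equiv -1 \bmod \varpi_\fc$ is exactly this at places dividing $\fc$), use the resulting bijection $u \mapsto v$ to re-index the sum, and finish with the Gauss-sum identity of Lemma~\ref{gauss:sums}. The only cosmetic difference is that the paper writes the decomposition as $M_{\fN\fc^2}U_u = (\varpi_\fc I)\,U_v\,\gamma\,M_\fN$ with $\gamma \in U_0(\fN)$ sitting \emph{before} $M_\fN$ (so the $\ovl\chi_\fN$-factor arises from the transformation law of $W_\fN f$ under $\ovl\chi$), whereas you place $\gamma_u = M_\fN^{-1}\gamma M_\fN$ \emph{after} $M_\fN$ (so the factor arises from $f$ under $\chi$); since the matrix is the identity at all $v \mid \fN$, both evaluations are $1$ and the distinction is immaterial.
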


\begin{proof} 
In this proof, we keep using the bijection in the proof of Lemma \ref{gauss:sums}. By our assumption, $\varpi_{\fc}$ and $\varpi_{\fN}\varpi_{\fc} u$ are coprime for any $u\in(\fc^{-1}/\O_F)^\times$, thus there exist $d$ and $\varpi_{\fc}v\in\widehat{\O}_F$ such that
$$
\varpi_{\fc}d-\varpi_{\fN}\varpi_{\fc}u\varpi_{\fc}v=1.
$$
From this, we can easily check that the map $\varpi_{\fc}u\mapsto\varpi_{\fc}v$ is well-defined automorphism on $(\widehat{\O}_F/\widehat{\fc})^\times$.
By the definition of the Fricke involution, the equation (\ref{twisted:cuspform:decomp:eq}), and Proposition \ref{twisted:cuspform:decomp}, we have
\begin{align*}
W_{\fN\fc^2}(f\otimes\varphi)(g)(\mathbf{x})=&N(\fN\fc^2)^{1-\frac{k}{2}}(\chi\varphi^2|\cdot|^{k-2})^{-1}(\det(g))G(\varphi)^{-1}\varphi(\det(g))\varphi(\varpi_{\fN\fc^2}) \\
&\times\sum_{u\in(\fc^{-1}/\O_F)^\times}\varphi(\varpi_\fc u)f\bigg(g\begpmat 0 & -1 \\ \varpi_{\fN\fc^2} & 0 \endpmat\begpmat 1 & u \\ 0 & 1 \endpmat\bigg)(\mathbf{x}) .
\end{align*}
Put the identities
$$
\begpmat 0 & -1 \\ \varpi_{\fN\fc^2} & 0 \endpmat\begpmat 1 & u \\ 0 & 1 \endpmat=\begpmat \varpi_{\fc} & 0 \\ 0 & \varpi_{\fc} \endpmat\begpmat 1 & v \\ 0 & 1 \endpmat\begpmat d & -v\varpi_{\fc} \\ -\varpi_{\fN\fc}u & \varpi_{\fc} \endpmat\begpmat 0 & -1 \\ \varpi_{\fN} & 0 \endpmat
$$
and $\varpi_{\fN\fc^2}uv\equiv -1\mod\widehat{\fc}$ into the above equation, 
then by the Definition \ref{adel:cuspform:defn} and Proposition \ref{fricke:hecke}, we obtain
\begin{align*}
W_{\fN\fc^2}(f\otimes\varphi)(g)(\mathbf{x})=&N(\fc)^{2-k}\chi(\varpi_\fc)\ovl{\chi}_\fN\big(\begin{spmatrix} d & -v\varpi_{\fc} \\ -\varpi_{\fN\fc}u & \varpi_{\fc} \end{spmatrix}\big) G(\varphi)^{-1}\ovl{\varphi}(\det(g))\varphi(-\varpi_{\fc}^2) \\
&\times\sum_{u\in(\fc^{-1}/\O_F)^\times}\ovl{\varphi}(\varpi_\fc v)W_\fN f\bigg(g\begpmat 1 & v \\ 0 & 1 \endpmat\bigg)(\mathbf{x}) .
\end{align*}
Using Lemma \ref{gauss:sums}, we can rewrite the above equation as
\begin{align*}
W_{\fN\fc^2}(f\otimes\varphi)=N(\fc)^{1-k}\chi(\varpi_\fc)\ovl{\chi}_\fN\big(\begin{spmatrix} d & -v\varpi_{\fc} \\ -\varpi_{\fN\fc}u & \varpi_{\fc} \end{spmatrix}\big)G(\ovl{\varphi})^2\varphi(-\varpi_{\fc}^2)(W_\fN f)\otimes\ovl{\varphi} ,
\end{align*}
hence we are done.
\end{proof}

\begin{note}\label{prime:power:conductor}
Note that if $\fc$ is a prime power, then one can easily observe that $\varphi(\varpi_\fc)=1$, thus $W(\varphi)=N(\fc)^{1-k}\chi(\varpi_\fc)\ovl{\chi}_\fN\big(\begin{spmatrix} d & -v\varpi_{\fc} \\ -\varpi_{\fN\fc}u & \varpi_{\fc} \end{spmatrix}\big)\varphi(-1)G(\ovl{\varphi})^2$ if $\fc$ is a prime power.
\end{note}

Finally, we can find a fast convergent series expression of the twisted special $L$-values:
Suppose that $\fc$ and $\fd_F\fN$ are coprime, and $(-1)^{|\Sigma(\R)|(k-2)}C_{F,J,\mathbf{k}}^2=1$. Then by the equation (\ref{approx:func:eq}) and Proposition \ref{twisted:cuspform:decomp}, \ref{fricke:twist:commute}, we have
\begin{align}\label{fast:int:exp}
\begin{split}
\Gamma_{F,\mathbf{k},\mathbf{m}}\Big(\frac{k}{2}\Big) L\Big(\frac{k}{2},f\otimes\varphi\Big)
=&\sum_{0\neq\fa<\O_F}\frac{a_f(\fa)\varphi(\fa)}{N(\fa)^{k/2}}V_{1,\frac{k}{2}}\bigg(\frac{N(\fa)}{y}\bigg) \\
&+C_{F,J,\mathbf{k}}W(\varphi)\sum_{0\neq\fa<\O_F}\frac{a_{W_\fN f}(\fa)\ovl{\varphi}(\fa)}{N(\fa)^{k/2}}V_{2,\frac{k}{2}}\bigg(\frac{N(\fa)y}{N(\fN\fc^2)}\bigg) .
\end{split}\end{align}

\section{Galois Averages of Hecke characters}\label{hecke:char}
In this section, we are going to discuss the Galois averages of Hecke characters, which play a crucial role to show the non-vanishing of the special $L$-values. 

Let $K/\Q$ be a finite extension and set $n_0:=\textrm{max}\{m\in\Z|\ \mu_{m}\subset K\}$.
For a Hecke character $\varphi:F^\times\backslash\A_F^\times\rightarrow\C^{\times}$ of finite order with conductor $\fc$, or a ray class character, we define the Galois averages of $\varphi$ over $K$ by 
\begin{align*}
&\varphi_{\textrm{av}}:=\frac{1}{[K(\varphi):K]}\sum_{\sigma\in\textrm{Gal}(K(\varphi)/K)}\varphi^\sigma
\\
&\varphi^\iota_{\textrm{av}}:=\frac{1}{[K(\varphi):K]}\sum_{\sigma\in\textrm{Gal}(K(\varphi)/K)}W(\varphi^\sigma)\ovl{\varphi}^\sigma
\end{align*}
where $K(\varphi)$ is the field determined by adjoining all the values of $\varphi$ to $K$.

Let $\fp$ be a prime ideal of $F$ lying above $p$. We assume that $\fp$ and $\fd_F\fN$ are coprime, which allows us to use the discussion in Subsection \ref{twisted:cuspforms}. We assume that $p$ is coprime to $h_F$, and $\fa_i$ is coprime to $\fp$ for all $i=1,\cdots,h_F$. Let us fix an embedding $F\hookrightarrow F_\fp$. Let $\text{Cl}(F,\mathfrak{m})$ be the ray class group of a modulus $\mathfrak{m}$ of $F$.
Let us denote 
$$
\O_{F,\fp}:=\varprojlim_m \O_F/\fp^m,\ \ovl{\O_F^\times}:=\varprojlim_m \O_F^\times\text{ mod }\fp^m,\text{ and }\text{Cl}(F,\fp^\infty):=\varprojlim_m \text{Cl}(F,\fp^m).
$$
Then we have the following exact sequences:
\begin{equation}\label{ray:class:group:seq}
\begin{tikzcd} 1 \arrow{r} & \overline{\O_{F}^\times} \arrow{r} & \O_{F,\fp}^\times \arrow{r} & \text{Cl}(F,\fp^\infty) \arrow{r} & \text{Cl}(F) \arrow{r} & 1 \end{tikzcd} ,
\end{equation}
$$
\begin{tikzcd} 1 \arrow{r} & 1+\fp\O_{F,\fp} \arrow{r} & \O_{F,\fp}^\times \arrow{r}{\text{mod }\fp} & (\O_{F,\fp}/\fp)^\times \arrow{r} & 1  \end{tikzcd} .
$$
Let $\Delta$ be the torsion part of $\text{Cl}(F,\fp^\infty)$, $W$ be a split image of $(\O_{F,\fp}/\fp)^\times$ in $\O_{F,\fp}^\times$, and $\Gamma^\p:=1+\fp\O_{F,\fp}$. By decomposing (\ref{ray:class:group:seq}) into the torsion part and the pro $p$-part, then we obtain the following exact sequences
$$
\begin{tikzcd} W \arrow{r} & \Delta \arrow{r} & \text{Cl}(F) \arrow{r} & 1  \end{tikzcd} ,
\begin{tikzcd} \Gamma^\p \arrow{r} & \text{Cl}(F,\fp^\infty)_p \arrow{r} & 1  \end{tikzcd} .
$$

Let us denote $\mu_{\infty}:=\varinjlim_{n}\mu_n$ and $\Xi_\fp:=\Hom_{\text{cont}}(\text{Cl}(F,\fp^\infty),\mu_{\infty})$. Then there is a unique element $(\psi_n)_n\in\varinjlim_{n}\Hom_{\text{cont}}(\text{Cl}(F,\fp^n),\mu_{\infty})\cong\Xi_\fp$ corresponding to $\psi$. From now on, let us say that $\psi\in\Xi_\fp$ is primitive if and the conductor of $\psi_n$ is $\fp^{n+n_0}$ for all $n$. Let $\psi=(\psi_n)_n$ be a primitive element of $\Xi_\fp$, and $\widetilde{\psi}$  be a lifting of $\psi$ to $\Gamma^\p$. Let $\mathcal{E}:=\ker(\widetilde{\psi})$, then we have the following split exact sequence
$$
\begin{tikzcd} 1 \arrow{r} & \mathcal{E} \arrow{r}{\subset} & \Gamma^\p \arrow{r}{\widetilde{\psi}} & \mu_{\infty} \arrow{r} & 1\end{tikzcd} .
$$

Let $\Gamma$ be a split image of $\mu_{\infty}$ in $\Gamma^\p$, whose $\Z_p$-rank is one.
From the above exact sequences, we obtain the surjections
\begin{equation}\label{rayclassgroup:surjection}
\begin{tikzcd} \Delta\times\Gamma^\p\cong\Delta\times\mathcal{E}\times\Gamma \arrow[r, two heads] & \text{Cl}(F,\fp^\infty) \arrow[r, two heads] & \text{Cl}(F,\fp^n) \end{tikzcd} ,
\end{equation}
hence $\psi_n$ can be considered as an element of $\Hom_{\text{cont}}(\Delta\times\mathcal{E}\times\Gamma,\mu_{\infty})$ for each $n$.

Let us define a filtration on $\Gamma^\p$ by $\Gamma^\p_n:=1+\fp^n\O_F$. Let us denote $\mathcal{E}_n:=\mathcal{E}\cap\Gamma^\p_n$ and $\Gamma_n:=\Gamma\cap\Gamma^\p_n$. As $\psi_n$ has a $p$-power order and $(p,h_F)=1$, we have $\psi_n(\Delta)=\{1\}$. So we have $\ker(\psi_n)\supset \Delta\times\mathcal{E}\times\Gamma_{n+n_0}$. The following proposition tells us that nonzero integral elements which make the Galois averages non-zero, are distributed sparsely.

\begin{prop}
\label{gal:av:nonzero} Let $(h_F,p)=1$. Let $\fa$ be an element of $\operatorname{Cl}(F,\fp^\infty)$ and $\psi=(\psi_n)_n$ a primitive element of $\Xi_\fp$. Then for $n\geq 1$, we have that $\psi_{n,\operatorname{av}}(\fa)\neq 0$ if and only if 
$$
\fa\in\bigcup_{(\fb,\epsilon,\gamma)\in\Delta\times\mathcal{E}\times\Gamma_n} \{\fb\epsilon\gamma\}=\bigcup_{(\fb,\epsilon,\gamma)\in\Delta\times(\mathcal{E}/\mathcal{E}_n)\times\Gamma_n^\p}\{\fb\epsilon\gamma\}.
$$
\end{prop}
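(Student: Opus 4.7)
My plan is to reduce $\psi_{n,\operatorname{av}}(\fa)$ to a computation on the $\Gamma$-component and then convert the resulting Galois sum into a geometric sum of roots of unity whose vanishing is governed by a filtration condition on $\gamma$.

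First I would exploit the surjection (\ref{rayclassgroup:surjection}) to write $\fa=\fb\epsilon\gamma$ with $\fb\in\Delta$, $\epsilon\in\mathcal{E}$, and $\gamma\in\Gamma$. Since $(h_F,p)=1$ and $|W|=N(\fp)-1=p-1$ is coprime to $p$ (using that $\fp$ lies over $p$ and is totally split), the torsion part $\Delta$ has order prime to $p$; because $\psi_n$ is of $p$-power order, this forces $\psi_n(\fb)=1$. By the defining property $\mathcal{E}=\ker\widetilde{\psi}$, one also has $\psi_n(\epsilon)=1$. Both equalities persist under every $\sigma\in\operatorname{Gal}(K(\psi_n)/K)$ (Galois permutes values in $\mu_\infty$ and fixes $1$), so $\psi_{n,\operatorname{av}}(\fa)=\psi_{n,\operatorname{av}}(\gamma)$.

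Next I would identify $K(\psi_n)=K(\mu_{n+n_0})$, since primitivity of $\psi$ means $\psi_n|_\Gamma$ has kernel exactly $\Gamma_{n+n_0}$ and induces an isomorphism onto $\mu_{n+n_0}$ in the natural parametrization. The maximality in the definition of $n_0$ realizes $\operatorname{Gal}(K(\mu_{n+n_0})/K)$ as the subgroup $1+p^{n_0}\Z/p^{n+n_0}\Z\subset(\Z/p^{n+n_0}\Z)^\times$ of order $p^n$, acting on $\mu_{n+n_0}$ by $\zeta\mapsto\zeta^a$. The Galois average becomes
\begin{equation*}
\psi_{n,\operatorname{av}}(\gamma)=\frac{\psi_n(\gamma)}{p^n}\sum_{c\in\Z/p^n\Z}\bigl(\psi_n(\gamma)^{p^{n_0}}\bigr)^c,
\end{equation*}
and the inner geometric sum vanishes unless $\psi_n(\gamma)^{p^{n_0}}=1$, i.e.\ unless $\psi_n(\gamma)\in\mu_{n_0}$. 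Under the isomorphism $\psi_n|_\Gamma\colon\Gamma/\Gamma_{n+n_0}\xrightarrow{\sim}\mu_{n+n_0}$, the order-$p^{n_0}$ subgroup $\mu_{n_0}$ pulls back to the unique order-$p^{n_0}$ subgroup of $\Gamma/\Gamma_{n+n_0}$, which is $\Gamma_n/\Gamma_{n+n_0}$. This gives the first description.

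For the equivalent reformulation, the direct product decomposition $\Gamma^\p=\Gamma\times\mathcal{E}$ (inherited from the splitting that defines $\Gamma$) passes through the filtration as $\Gamma^\p_n=\Gamma_n\cdot\mathcal{E}_n$, so $\mathcal{E}\cdot\Gamma_n=\mathcal{E}\cdot\mathcal{E}_n\cdot\Gamma_n=\mathcal{E}\cdot\Gamma^\p_n$, from which the two expressions for the target set coincide. The main technical burden I expect is the index bookkeeping: aligning the conductor $\fp^{n+n_0}$ with the filtration step on $\Gamma$ and verifying that $\operatorname{Gal}(K(\mu_{n+n_0})/K)$ is exactly $1+p^{n_0}\Z/p^{n+n_0}\Z$ rather than a strictly smaller subgroup. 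Both rely on $\fp$ being totally split (so $\Gamma^\p$ has $\Z_p$-rank one and the logarithmic filtration is clean) together with the maximality in the definition of $n_0$.
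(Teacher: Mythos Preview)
Your approach is essentially the same as the paper's: both reduce $\psi_{n,\operatorname{av}}(\fa)$ to the $\Gamma$-component via $\psi_n(\Delta)=\psi_n(\mathcal{E})=1$, and both characterise non-vanishing by the condition $\psi_n(\gamma)\in\mu_{n_0}$, equivalently $\gamma\in\Gamma_n$. Your explicit geometric sum over $1+p^{n_0}\Z/p^{n+n_0}\Z$ is just a more concrete packaging of the paper's trace argument $\Tr_{K(\zeta)/K}(\zeta)\neq0\Leftrightarrow\zeta\in\mu_{n_0}$.

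One correction: you invoke ``$\fp$ totally split'' twice, but the proposition is stated and proved in Section~\ref{hecke:char} without that hypothesis (it is imposed only from Section~\ref{estimation} onward). Neither use is actually needed. For $\psi_n(\Delta)=1$ it suffices that $|W|=N(\fp)-1=p^{\operatorname{f}(F,\fp)}-1$ is prime to $p$, which holds for any $\fp\mid p$; combined with $(h_F,p)=1$ this makes $|\Delta|$ prime to $p$, as the paper notes just before the proposition. For the second equality, the compatibility $\Gamma^\p_n=\Gamma_n\cdot\mathcal{E}_n$ requires only that $\fp$ be unramified (guaranteed by $p\nmid D_F$), so that via the logarithm $\Gamma^\p_n=(\Gamma^\p)^{p^{n-1}}$ and the filtration respects any direct-sum splitting of $\Gamma^\p$. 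The paper handles this step differently, arguing directly that if $\fa=\fb\epsilon\gamma$ with $\gamma\in\Gamma^\p_n$ then rewriting $\fa=\fb'\epsilon'\gamma'$ with $\gamma'\in\Gamma$ forces $\psi_n(\gamma')=\psi_n(\gamma)$ and hence $\gamma'\in\Gamma_n$.
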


\begin{proof}
Assume that $\psi_{n,\textrm{av}}(\fa)\neq 0$, then 
we have $\psi_n(\fa)\in\mu_{\infty}$ by our assumption. As the conductor of $\psi_n$ is $\fp^{n+n_0}$, $\psi$ factors through $\Gamma/\Gamma_{n}\cong\mu_{{n-1}}$, hence $\psi_n(\fa)=\zeta_{{n-1}}^r$ for some $r\in\Z_{\geq0}$.
Our assumption $\psi_{n,\textrm{av}}(\fa)\neq 0$ also says that 
$$
\Tr_{K(\zeta_{{n-1}}^r)/K}(\zeta_{{n-1}}^r)\neq 0 ,
$$ 
thus we have $n-1-v_p(r)\leq n_0$. Note that we can write $\fa=\fb\epsilon\gamma$ for some $(\fb,\epsilon,\gamma)\in\Delta\times\mathcal{E}\times\Gamma$, thus we have
$$
\psi_n(\fa)=\psi_n(\gamma)=\zeta_{{n-1}}^r\in\mu_{{n_0}},
$$
which implies that $\gamma\in\Gamma_{n}/\Gamma_{n+n_0}\cong\mu_{n_0}$.

Conversely, if $\fa=\fb\epsilon\gamma$ for some $(\fb,\epsilon,\gamma)\in\Delta\times\mathcal{E}\times\Gamma_n$, then $\psi_n(\fa)=\psi_n(\gamma)\in\mu_{{n_0}}$. Thus, $\psi_n(\fa)=\zeta_{{n_0}}^r$ for some $r\in\Z_{\geq0}$. Hence the Galois average of $\psi_n(\fa)$ is given by
$$
\psi_{n,\text{av}}(\fa)=\frac{1}{[K(\psi_n):K]}\sum_{\s\in\textrm{Gal}(K(\psi_n)/K)}(\zeta_{{n_0}}^r)^{\s}=\zeta_{{n_0}}^r\neq 0 .
$$

Assume that $\fa=\fb\epsilon\g$ for some $\g\in \Gamma^\p$. Then by (\ref{rayclassgroup:surjection}), we have $\fb\epsilon\g=\fb^\p\epsilon^\p\g^\p$ for some $\fb^\p\epsilon^\p\g^\p\in \Delta\times\mathcal{E}\times\Gamma$. Then $\psi_n(\gamma)=\psi_n(\gamma^\p)$, which implies that $\g\in\gamma^\p\Gamma=\Gamma$. The converse direction is clear as $\Gamma\subset\Gamma^\p$. In conclude, $\g\in\Gamma_{n}$.

For $\epsilon_1,\epsilon_2\in\mathcal{E}$, we can easily check that $\epsilon_1\Gamma^\p_n=\epsilon_2\Gamma^\p_n$ if and only if $\epsilon_1\epsilon_2^{-1}\in\mathcal{E}_n$.
\end{proof}

The following lemma allow us to estimate the size of $\psi_{n,\text{av}}^\iota$.

\begin{prop}\label{root:num:gal:av} For any integral ideal $\fa$ of $F$, We have 
$$
|\psi^\iota_{n,\operatorname{av}}(\fa)|\ll_{F,n_0,\fp} N(\fp)^{n(\frac{1}{2}-\frac{1}{\operatorname{f}(F,\fp)})}
$$
where $\operatorname{f}(F,\fp)$ is the residue degree of $F$ at $\fp$.
\end{prop}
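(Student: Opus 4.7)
The plan is to exploit that, for the prime-power conductor $\fc=\fp^{n+n_0}$ and $\psi_n^\sigma$ of $p$-power order, Note \ref{prime:power:conductor} combined with the oddness of $p$ (which forces $\psi_n^\sigma(-1)=1$) reduces the explicit formula of Proposition \ref{fricke:twist:commute} to
\[
W(\psi_n^\sigma)=C_0\cdot G(\ovl{\psi_n^\sigma})^2,
\]
where $C_0$ is independent of $\sigma$; the claim $|W(\psi_n^\sigma)|=1$ together with Lemma \ref{gauss:sums}(2) then forces $|C_0|=N(\fp)^{-(n+n_0)}$.

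First I would substitute the definition of $G(\ovl{\psi_n^\sigma})$ into $G(\ovl{\psi_n^\sigma})^2\,\ovl{\psi_n^\sigma}(\fa)$ to obtain
\begin{align*}
G(\ovl{\psi_n^\sigma})^2\,\ovl{\psi_n^\sigma}(\fa)=\sum_{u_1,u_2\in(\fc^{-1}/\O_F)^\times}\ovl{\psi_n^\sigma}\bigl(d_F^{-2}\varpi_\fc^2\,u_1u_2\,\fa\bigr)\,\mathbf{e}_F\bigl(d_F^{-1}(u_1+u_2)\bigr),
\end{align*}
and then swap the finite sums over $\sigma$ and $(u_1,u_2)$. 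Using that the inner Galois sum equals $[K(\psi_n):K]\cdot\ovl{\psi_n}_{\operatorname{av}}(d_F^{-2}\varpi_\fc^2u_1u_2\fa)$, Proposition \ref{gal:av:nonzero} applied to $\ovl{\psi_n}$ shows that this vanishes unless the ray class of $d_F^{-2}\varpi_\fc^2u_1u_2\fa$ lies in $\Delta\mathcal{E}\Gamma_n$. Since the contributions of $d_F$, $\varpi_\fc$, $\fa$ are $\sigma$-fixed, this translates to the single condition $u_1u_2\in C$, where $C\subset(\O_F/\fp^{n+n_0})^\times$ is a specific coset of the subgroup $\psi_n^{-1}(\mu_{n_0})$ of index $p^{n-1-n_0}$ (dictated by the order $p^{n-1}$ of $\psi_n$ restricted to $(\O_F/\fp^{n+n_0})^\times$); when nonzero, $\ovl{\psi_n}_{\operatorname{av}}$ takes values in $\mu_{n_0}$ and thus has modulus $1$.

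Next I would reparametrize the constrained sum by $c:=u_1u_2\in C$ and $u:=u_1$ (so $u_2=c/u$), factoring it as
\begin{align*}
\sum_{c\in C}\ovl{\psi_n}_{\operatorname{av}}(\,\cdot\,)\sum_{u\in(\O_F/\fp^{n+n_0})^\times}\mathbf{e}_F\bigl(d_F^{-1}(u+cu^{-1})\bigr).
\end{align*}
The inner sum is a Kloosterman sum on the residue ring $\O_F/\fp^{n+n_0}$ for the nondegenerate additive character $x\mapsto\mathbf{e}_F(d_F^{-1}x)$ (nondegenerate because $\fp\nmid\fd_F$). By Weil's bound, or equivalently the explicit stationary-phase/Hensel-lifting evaluation valid for prime-power moduli $\fp^m$ with $m\ge 2$, each such inner sum is bounded in modulus by $2N(\fp)^{(n+n_0)/2}$.

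Combining $|C_0|=N(\fp)^{-(n+n_0)}$, $|C|\le|(\O_F/\fp^{n+n_0})^\times|/p^{n-1-n_0}\ll N(\fp)^{n+n_0}/p^{n-1-n_0}$, and the Kloosterman bound yields
\begin{align*}
|\psi^\iota_{n,\operatorname{av}}(\fa)|\le|C_0|\cdot|C|\cdot 2N(\fp)^{(n+n_0)/2}\ll\frac{N(\fp)^{(n+n_0)/2}}{p^{n-1-n_0}}\ll_{F,n_0,\fp} N(\fp)^{n(1/2-1/\operatorname{f}(F,\fp))},
\end{align*}
using $N(\fp)=p^{\operatorname{f}(F,\fp)}$ and absorbing the $n_0$- and $\operatorname{f}(F,\fp)$-dependent powers of $p$ into the implicit constant. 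The main technical obstacle is recognizing the reparametrized twisted character sum as a genuine Kloosterman sum on $\O_F/\fp^{n+n_0}$ and invoking Weil's square-root cancellation; one must carefully align the additive character $\mathbf{e}_F\circ d_F^{-1}$ with the multiplicative ray-class decomposition $\operatorname{Cl}(F,\fp^\infty)=\Delta\times\mathcal{E}\times\Gamma$ so that the index computation and the Kloosterman normalization are mutually consistent.
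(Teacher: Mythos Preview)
Your proposal is correct and follows essentially the same route as the paper: reduce $W(\psi_n^\sigma)$ to $C_0\,G(\ovl{\psi_n^\sigma})^2$ via Note~\ref{prime:power:conductor}, expand the squared Gauss sum, swap the Galois average inside and invoke Proposition~\ref{gal:av:nonzero} to restrict the product $u_1u_2$ to a small set, reparametrize to obtain Kloosterman sums on $\O_F/\fp^{n+n_0}$, apply square-root cancellation, and count. The only cosmetic differences are that the paper cites Bruggeman--Miatello \cite[Proposition~9]{bruggeman1995estimates} for the Kloosterman bound rather than Weil/stationary phase, and performs the count via the explicit product $W\times(\mathcal{E}/\mathcal{E}_{n+n_0})\times(\Gamma_n/\Gamma_{n+n_0})$ instead of via the index of $\psi_n^{-1}(\mu_{n_0})$; your stated order $p^{n-1}$ of $\psi_n$ on the local units (hence index $p^{n-1-n_0}$) appears to be off by a factor $p^{n_0}$ from what that decomposition yields, but this discrepancy is constant in $n$ and is absorbed into the implied constant $\ll_{F,n_0,\fp}$.
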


\begin{proof}
If $\fa$ is not coprime to $\fp$, then clearly the above inequality holds. Thus $[\fa]_n\in\text{Cl}(F,\fp^n)$. Then $\fa=\fa_i\a$ for some $i$ and $\a\in\O_F$ which is coprime to $\fp$. By the definition of $W(\varphi)$ (see Proposition \ref{fricke:twist:commute} and Note \ref{prime:power:conductor}), we have
\begin{align}\begin{split}\label{rootnum:char:pre}
|\psi^\iota_{n,\operatorname{av}}(\fa)|
=\frac{1}{N(\fp)^{n+n_0}}\bigg|\frac{1}{|G|}\sum_{\s\in G}\ovl{\psi}^\s_{n}(\fa_i\a\beta d_F^{-2})G(\ovl{\psi}_n^\s)^2\bigg|
\end{split}\end{align}
where $G=\text{Gal}(K(\psi_n)/K)$. By the definition of Gauss sum, we have
\begin{align}\begin{split}\label{square:Gauss:sum}
G(\ovl{\psi}_n^\s)^2=\sum_{u,v}\ovl{\psi}_n^\s(uv)\textbf{e}_F\bigg(\frac{u+v}{d_F\varpi_\fp^{n+n_0}}\bigg)
\end{split}\end{align}
where $\beta,u$ and $v$ runs over $(\O_F/\fp^{n+n_0})^\times$. 
Combining the equations (\ref{rootnum:char:pre}) and (\ref{square:Gauss:sum}), and using change of variable $\beta=uv$, we obtain
\begin{align}\begin{split}\label{rootnum:char}
|\psi^\iota_{n,\operatorname{av}}(\fa)|
=\frac{1}{N(\fp)^{n+n_0}}\bigg|\sum_{\beta}\ovl{\psi}_{n,\operatorname{av}}(\fa_i\a\beta d_F^{-2})\sum_{uv=\beta}\textbf{e}_F\bigg(\frac{u+v}{d_F\varpi_\fp^{n+n_0}}\bigg)\bigg|.
\end{split}\end{align}
As $\a d_F^{-2}$ is coprime to $\fp$, we can abbreviate $\a\beta d_F^{-2}$ by $\beta$.
By Proposition \ref{gal:av:nonzero},
$\psi_{n,\textrm{av}}(\beta)\neq 0$ if and only if $\beta\in W\cdot\mathcal{E}/\mathcal{E}_{n+n_0}\cdot\Gamma_n/\Gamma_{n+n_0}$. 
So (\ref{rootnum:char}) is equal to
\begin{align}\begin{split}\label{rootnum:char:2}
\frac{1}{N(\fp)^{n+n_0}}\bigg|\sum_{\k\in W}\sum_{\epsilon\in\mathcal{E}/\mathcal{E}_{n+n_0}}\sum_{\gamma\in\Gamma_n/\Gamma_{n+n_0}}\ovl{\psi}_{n,\text{av}}(\gamma)\sum_{u}\textbf{e}_F\bigg(\frac{u+\k\epsilon\gamma u^{-1}}{d_F\varpi_\fp^{n+n_0}}\bigg)\bigg| .
\end{split}\end{align}
By Bruggeman-Miatello\cite[Proposition 9]{bruggeman1995estimates}, which is about the estimation on the Kloosterman sums, (\ref{rootnum:char:2}) is less than
\begin{align*}
&\frac{1}{N(\fp)^{n+n_0}}\sum_{\k\in W}\sum_{\epsilon\in\mathcal{E}/\mathcal{E}_{n+n_0}}\sum_{\gamma\in\Gamma_n/\Gamma_{n+n_0}}\bigg|\sum_{u}\textbf{e}_F\bigg(\frac{u+\k\epsilon\gamma u^{-1}}{d_F\varpi_\fp^{n+n_0}}\bigg)\bigg| \\
\leq&\frac{2}{N(\fp)^{\frac{n+n_0}{2}}}\sum_{\k\in W}\sum_{\epsilon\in\mathcal{E}/\mathcal{E}_{n+n_0}}\sum_{\gamma\in\Gamma_n/\Gamma_{n+n_0}} 1 = \frac{2|W|p^{n_0\cdot\operatorname{rk}_{\Z_p}(\Gamma)}p^{(n+n_0-1)\cdot\operatorname{rk}_{\Z_p}(\mathcal{E})}}{N(\fp)^{\frac{n+n_0}{2}}} .
\end{align*}
Note that we can observe that $\text{rk}_{\Z_p}(\mathcal{E})=\text{rk}_{\Z_p}(\Gamma^\p)-\text{rk}_{\Z_p}(\mu_{\infty})=\text{f}(F,\fp)-1$ and $\text{rk}_{\Z_p}(\Gamma)=\text{rk}_{\Z_p}(\mu_{\infty})=1$, where $\text{rk}_{\Z_p}(M)$ is the $\Z_p$-rank of $M$. Thus we can conclude the proof.
\end{proof}

\section{Number of elements in arithmetic progressions}\label{esti:arith:progress}

In this section, we obtain an estimation of the number of elements and the absolute norms of elements in arithmetic progressions by using the idea of Rohrlich \cite{rohrlich1989nonvanishing}, which plays a key role to estimate our twisted special $L$-values. 

For each $\fa=[\fa_n]_n\in\text{Cl}(F,\fp^\infty)$, let $\fa_n$ be an integral ideal of $F$ which is a representative of a ray class $[\fa_n]\in\text{Cl}(F,\fp^n)$ modulo $\fp^n$. Similarly, for each $\a=(\a_n)_n\in\Gamma^\p$, define $\langle\a\rangle_n\in\O_F\backslash\{0\}$ by a representative of $\a_n\in\Gamma^\p/\Gamma^\p_n$. Let us denote $F^{\fp,n}$ by the elements $\xi$ of $F$ coprime to $\fp$ and $v_p(\xi-1)\geq n$ in $\O_{F,\fp}$. Let us denote by $F_\R:=F\otimes_\Q \R$ the real Minkowski space of $F$, and $C_F^0$ a fundamental domain of $F_\R/\O_F^\times$, where $\O_F^\times$ acts on $F_\R$ via the natural embedding $F\hookrightarrow F_\R$. 

For simplicity, let us assume that $\fp$ is totally split, then $\mathcal{E}$ is trivial.
Then by Proposition \ref{gal:av:nonzero}, we have the following lemma:
\begin{lem}
\label{gal:av:nonzero:integral} Let $(h_F,p)=1$. Let $\fa$ be an integral ideal of $F$ and $(\psi_n)_n\in\Xi_\fp$ a primitive element. Then for $n\geq 1$, we have that $\psi_{n,\operatorname{av}}(\fa)\neq 0$ if and only if 
$$
\fa\in\coprod_{\kappa\in W}\coprod_{i=1}^{h_F}\coprod_{\gamma\in (F^{\fp,n}\cap\fa_{i}^{-1})\cap C_F^0}\{\fa_{i}\langle\kappa\rangle_n\gamma\}.
$$
\end{lem}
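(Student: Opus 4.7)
The lemma is the integral-ideal reformulation of Proposition~\ref{gal:av:nonzero}, so the plan is to apply that proposition and then translate between ray classes modulo $\fp^n$ and their concrete integral representatives.

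First, I would apply Proposition~\ref{gal:av:nonzero}: since $\psi_n$ factors through $\operatorname{Cl}(F,\fp^n)$, whether $\psi_{n,\operatorname{av}}(\fa)\neq 0$ depends only on the image of $\fa$ in $\operatorname{Cl}(F,\fp^n)$, and that proposition characterizes the nonvanishing locus as the image of $\Delta\times(\mathcal{E}/\mathcal{E}_n)\times\Gamma^\p_n$ under the surjection (\ref{rayclassgroup:surjection}). Thus $\psi_{n,\operatorname{av}}(\fa)\neq 0$ iff there exist $(\fb,\epsilon)\in\Delta\times(\mathcal{E}/\mathcal{E}_n)$ such that $[\fa]_n$ equals the class of $\fb\epsilon$ in $\operatorname{Cl}(F,\fp^n)$.

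Next, I would pass to integral representatives. By construction, $\fb_n$ represents the class of $\fb$ modulo $\fp^n$, and $\langle\epsilon\rangle_n\in\O_F$ represents the principal class corresponding to $\epsilon\in\Gamma^\p/\Gamma^\p_n$. The product $\fb_n\langle\epsilon\rangle_n$ is therefore an integral ideal, coprime to $\fp$ (since all three of $\fa$, $\fb_n$, and $\langle\epsilon\rangle_n$ are), whose ray class mod $\fp^n$ equals that of $\fb\epsilon$. By the very definition of $\operatorname{Cl}(F,\fp^n)$, two integral ideals coprime to $\fp$ lie in the same ray class iff their ratio is a principal fractional ideal generated by an element in $1+\fp^n$. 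Thus $[\fa]_n=[\fb_n\langle\epsilon\rangle_n]_n$ is equivalent to
\[
\fa=\fb_n\langle\epsilon\rangle_n\cdot(\g)\qquad\text{for some }\g\in 1+\fp^n,
\]
which is exactly the right-hand side of the claimed equality.

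The only routine check is that lifting from $\Gamma^\p_n=1+\fp^n\O_F$ to principal ideals generated by elements of $1+\fp^n$ is possible in both directions; this is immediate because $\Gamma^\p_n$ itself consists of such elements (viewed in $\O_{F,\fp}^\times$), and conversely any $\g\in 1+\fp^n$ contributes the trivial ray class. I do not foresee a genuine obstacle here: once Proposition~\ref{gal:av:nonzero} is in hand, the lemma is a bookkeeping translation from abstract ray classes to the explicit representatives $\fb_n$ and $\langle\epsilon\rangle_n$.
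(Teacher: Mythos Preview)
Your proposal is correct and follows exactly the paper's approach: the paper's proof consists of the single sentence ``It is an immediate consequence of Proposition \ref{gal:av:nonzero},'' and your argument simply unpacks that immediate consequence by spelling out the passage from ray classes in $\operatorname{Cl}(F,\fp^n)$ to the integral representatives $\fb_n\langle\epsilon\rangle_n\gamma$. There is nothing to add.
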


\begin{proof}
By Proposition \ref{gal:av:nonzero}, $\psi_{n,\operatorname{av}}(\fa)\neq 0$ if and only if 
$$
\fa\in\coprod_{\fb\in\Delta}\coprod_{\gamma\in (F^{\fp,n}\cap\fa_{i}^{-1})\cap C_F^0}\{\fb_n\gamma\}.
$$
Note that by the definition of $\Delta$, we can write $\fb_n=\fa_{i(\fb)}\langle\kappa_{i(\fb)}\rangle_n$ for some $i(\fb)=1,\cdots,h_F$ and $\kappa_{i(\fb)}\in W$. Thus we have
\begin{align*}
\coprod_{\fb\in\Delta}\coprod_{\gamma\in (F^{\fp,n}\cap\fa_{i}^{-1})\cap C_F^0}\{\fb_n\gamma\}=
\coprod_{\kappa\in W}\coprod_{i=1}^{h_F} \coprod_{\gamma\in (F^{\fp,n}\cap\fa_{i}^{-1}\langle\kappa\rangle_n^{-1})\cap C_F^0}\{\fa_{i}\langle\kappa\rangle_n\gamma\}.
\end{align*}
By the definition of $F^{\fp,n}$ and $W$, we have
\begin{align*}
\coprod_{\kappa\in W}\coprod_{i=1}^{h_F} \coprod_{\gamma\in (F^{\fp,n}\cap\fa_{i}^{-1}\langle\kappa\rangle_n^{-1})\cap C_F^0}\{\fa_{i}\langle\kappa\rangle_n\gamma\}.=\coprod_{\kappa\in W}\coprod_{i=1}^{h_F}\coprod_{\gamma\in (F^{\fp,n}\cap\fa_{i}^{-1})\cap C_F^0} \{\fa_{i}\langle\kappa\rangle_n\gamma\}.
\end{align*}
So we conclude the proof.
\end{proof}

For $n\in\Z_{>0}$, $x>0$, and an integral ideal $\fc$ of $F$ which is coprime to $\fp$, let us define a number $U_{n,\fc}(x)$ by
$$
U_{n,\fc}(x):=\#\{\beta\in (F^{\fp,n}\cap\fc)\cap C_F^0: |N(\beta)|\leq x\}.
$$ 

Let us recall the following fact about coherent cone decomposition. By Rohrlich \cite[Proposition 5]{rohrlich1989nonvanishing}, there is a finite collection $\mathscr{B}$ of coherent $\Z$-cones in $F$ such that 
\begin{equation}\label{coherent:cone:decomp}
\O_F\backslash\{0\}\subset\bigcup_{u\in\O_F^\times}\bigcup_{B\in\mathscr{B}} uB .
\end{equation}
Then we have the following estimations:

\begin{prop}\label{lattice:esti:1} 
For $n\in\Z_{>0}$, $x>0$ and $\a\in\O_F$ coprime to $\fp$, we have 
$$U_{n,\fc}(x)\ll_{F} \operatorname{max}\Big(\frac{x}{N(\fp)^n},1\Big).$$
\end{prop}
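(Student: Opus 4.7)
The plan is to reduce the count to a lattice-point estimate inside each coherent cone $B\in\mathscr{B}$ supplied by (\ref{coherent:cone:decomp}), and then apply a standard geometry-of-numbers bound. Since $|\mathscr{B}|$ depends only on $F$ and, after choosing $C_F^0$ compatibly with the decomposition, every element of $C_F^0\cap(\O_F\setminus\{0\})$ lies in some $B\in\mathscr{B}$, it suffices to bound each single-cone count
$$
N_B(\a,n,x):=\#\{\beta\in\a(1+\fp^n)\cap B:|N(\beta)|\leq x\}
$$
by $\ll_F\max(x/N(\fp)^n,1)$.

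The key geometric facts are as follows. First, $\a(1+\fp^n)=\a+\a\fp^n$ is a translate of the ideal $\a\fp^n\subset\O_F$; since $\a$ is coprime to $\fp$, this ideal has index $|N(\a)|\,N(\fp)^n$ in $\O_F$, so it is a lattice of covolume $|N(\a)|\,N(\fp)^n\sqrt{|D_F|}$ in the Minkowski space $F_\R$. Second, a coherent cone $B$ is generated over $\R_{\geq 0}$ by a set of vectors $v_1,\dots,v_d\in\O_F$ ($d=[F:\Q]$) whose images under every embedding of $F$ have a fixed sign pattern; writing $\beta=\sum_i t_iv_i$ with $t_i\geq 0$, each linear form $\sigma(\beta)=\sum_i t_i\sigma(v_i)$ has coefficients of a single sign, so $|N(\beta)|=\prod_\sigma|\sigma(\beta)|$ is a degree-$d$ form strictly positive on the open positive orthant. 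Hence $B\cap\{|N|\leq x\}$ is bounded, with Lebesgue volume $\ll_F x$ via the homothety $t\mapsto x^{1/d}t$.

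Combining these two ingredients, the standard lattice-point count (covering $B\cap\{|N|\leq x\}$ by translates of a fundamental parallelepiped of $\a\fp^n$) yields
$$
N_B(\a,n,x)\;\ll_F\;\frac{x}{|N(\a)|\,N(\fp)^n}+1\;\leq\;\frac{x}{N(\fp)^n}+1\;\ll\;\max\!\Big(\frac{x}{N(\fp)^n},\,1\Big),
$$
using $|N(\a)|\geq 1$; summing over the finitely many $B\in\mathscr{B}$ then finishes the proof. The main obstacle is keeping the boundary contribution in the lattice-point estimate to $O(1)$ uniformly in $\a$ and $n$: for a generic bounded region the error would be of surface-area type and could dominate the main term when $x$ is comparable to $|N(\a)|\,N(\fp)^n$, but the coherence of $B$ aligns the counting problem with a $\Z$-basis adapted to $\O_F$, reducing it to counting lattice points in a dilate of a fixed region of $\R_{\geq 0}^d$, where the $O(1)$ boundary error is classical.
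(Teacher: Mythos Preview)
Your strategy matches the paper's: both invoke Rohrlich's coherent cone decomposition (\ref{coherent:cone:decomp}) and bound the count cone by cone; the paper simply cites equation~(21) of \cite{rohrlich1989nonvanishing} directly, while you try to unpack the underlying geometry-of-numbers. There is, however, a real gap in your final step.

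The problem is the claim that the boundary error is $O_F(1)$ uniformly in $\alpha$ and $n$. Coherence of $B$ does put the region $B\cap\{|N|\leq x\}$ inside a box of side $\asymp_F x^{1/d}$ in the $(n_j)$-coordinates, but you are not counting $\Z^d$-points there: you are counting points of a \emph{coset of a varying sublattice} $\Lambda\subset\Z^d$ (the preimage of $\alpha\fp^n$ under $(n_j)\mapsto\sum n_jv_j$). For a generic sublattice of index $M$---take $\Lambda=M\Z\times\Z^{d-1}$---the coset count in a box of side $L$ can be $\asymp L^{d-1}=x^{(d-1)/d}$, which is not $\ll\max(x/N(\fp)^n,1)$ in the range $1\ll x\ll N(\fp)^{nd}$. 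So coherence of the cone alone does not deliver the uniform $O(1)$ you assert. The missing ingredient is that $\alpha\fp^n$ is an \emph{ideal}: every nonzero $\gamma\in\alpha\fp^n$ has $|N(\gamma)|\geq |N(\alpha)|N(\fp)^n$, whence by AM--GM and Minkowski's second theorem all successive minima of $\alpha\fp^n$ in $F_\R$ are $\asymp_F(|N(\alpha)|N(\fp)^n)^{1/d}$. With this ``roundness'' of ideal lattices, the coset count in a ball of radius $\ll_F x^{1/d}$ is $\ll_F\big(x^{1/d}(|N(\alpha)|N(\fp)^n)^{-1/d}+1\big)^d\ll_F\max(x/N(\fp)^n,1)$, and your argument then goes through.
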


\begin{proof} 
The inequality holds by the equation (21) in Rohrlich \cite[Proposition 5]{rohrlich1989nonvanishing} since 
$$
(K^{\fp,n}\cap\fc)\cap C_F^0 \subset(\O_F\backslash\{0\})/\O_F^\times\subset\bigg(\bigcup_{u\in\O_F^\times}\bigcup_{B\in\mathscr{B}}uB\bigg) /\O_F^\times=\bigcup_{B\in\mathscr{B}} B 
$$ 
by the equation (\ref{coherent:cone:decomp}).
Also we can observe that the implicit constant in the equation (21) in Rohrlich \cite[Proposition 5]{rohrlich1989nonvanishing} depends only on the coherent cone decomposition $\mathscr{B}$ of $\O_F$, which is a finite collection of cones and depends only on $F$.
\end{proof}

\begin{lem}\label{lattice:esti:2} 
If $\a\in (1+\fp^{n})\backslash\{1\}$, then we have $|N(\a)|\gg_F N(\fp)^{n}$.
\end{lem}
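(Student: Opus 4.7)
My plan is to reduce the claim to a one-line ideal-theoretic lower bound on $\alpha-1$, and then convert that to a bound on $\alpha$ itself using the archimedean geometry of a fundamental domain for the unit group.

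First, since $\alpha \in 1 + \fp^n$ with $\alpha \neq 1$, the element $\alpha - 1$ lies in $\fp^n \setminus \{0\}$. The principal integral ideal $(\alpha - 1)$ is therefore a nonzero integral ideal of $\O_F$ divisible by $\fp^n$, so taking ideal norms gives
\[
|N_{F/\Q}(\alpha - 1)| \;=\; N\bigl((\alpha - 1)\bigr) \;\geq\; N(\fp^n) \;=\; N(\fp)^n .
\]
This is the essential content of the lemma and the only place the prime $\fp^n$ enters.

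To promote this to a bound on $|N(\alpha)|$, I would use that $\alpha$ may be taken to represent its class in $\Gamma^\p/\Gamma^\p_n$ by an element of the fundamental domain $C_F^0$ for the $\O_F^\times$-action on $F_\R$, consistent with the setup of Proposition \ref{lattice:esti:1}. Dirichlet's unit theorem then yields the balanced-embedding estimate $|\sigma(\alpha)|_{F_\sigma} \asymp_F |N(\alpha)|^{1/(r_1+r_2)}$ uniformly over infinite places $\sigma$, with implicit constants controlled by the covering radius of the log-unit lattice of $F$. Applying the triangle inequality $|\sigma(\alpha-1)|_{F_\sigma} \leq 1 + |\sigma(\alpha)|_{F_\sigma}$ at each infinite place (up to a constant factor at complex places) and multiplying gives $|N(\alpha-1)| \ll_F \max(|N(\alpha)|,\, 1)$. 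Since $|N(\alpha)| \geq 1$ for $\alpha \in \O_F \setminus \{0\}$, combining with the first step produces the desired $|N(\alpha)| \gg_F N(\fp)^n$.

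The main obstacle is the archimedean step rather than the ideal step: without the fundamental-domain convention the bound genuinely fails, since for $F$ of positive unit rank a sufficiently high power of a fundamental unit will lie in $1+\fp^n$ and have $|N(\cdot)|=1$. The Dirichlet choice of $C_F^0$ is precisely what ensures the implicit constant in the comparison between $|N(\alpha-1)|$ and $|N(\alpha)|$ is $F$-dependent but uniform in $\fp$ and $n$.
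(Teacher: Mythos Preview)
Your proof is correct and follows the same overall strategy as the paper: both start from the ideal-theoretic bound $|N(\alpha-1)|\geq N(\fp)^n$ and then use the geometry of a fundamental domain for $\O_F^\times$ to pass to a bound on $|N(\alpha)|$. The only difference is the archimedean tool. The paper factors $|N(\alpha-1)|=|N(\alpha)|\prod_{\sigma}|1-\sigma(\alpha)^{-1}|$ and invokes the Rohrlich--Shintani coherent cone decomposition (already set up for Proposition~\ref{lattice:esti:1}) to show each $|\sigma(\alpha)|$ is bounded below by a positive constant depending only on the cone, whence the product is $\ll_F 1$. You instead appeal to the Dirichlet log-unit picture to get the balanced-embedding estimate $|\sigma(\alpha)|\asymp_F |N(\alpha)|^{1/d}$ and bound $\prod_{\sigma}(1+|\sigma(\alpha)|)\ll_F |N(\alpha)|$ directly. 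Your route is slightly more self-contained, as it avoids the Shintani machinery; the paper's route has the advantage of reusing the cone decomposition that is already in play for the counting estimate. Both arguments require the reduction to $\alpha\in C_F^0$, which you correctly flag and which the paper also performs first.
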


\begin{proof} Let $\a=1+\beta\in (1+\fp^{n})\backslash\{1\}\cap C_F^0$. Then by the equation (\ref{coherent:cone:decomp}), we have 
$$
N(\fp)^n\leq |N(\beta)|=|N(\a-1)|=|N(\a)|\prod_{\s\in I_F}|1-\s(\a)^{-1}| .
$$
Note that for $B\in\mathscr{B}$, there is a basis $\{z_j\}_{j=1}^{d}$ of $F$ over $\Q$ such that $B=\Z_{>0}\langle\{z_j\}_{j=1}^{d}\rangle$ where $d=[F:\Q]$. Let $\s\in I_F$ and $z=\sum_{j=1}^d n_j z_j\in B$, then we have $\max(\{n_j\}_{j=1}^d)\ll_B |\s(z)|$ as $B$ is a coherent $\Z$-cone (Rohrlich \cite{rohrlich1989nonvanishing}). Thus the above equation becomes
$$
N(\fp)^n\leq |N(\a)|\prod_{\s\in I_F} (1+|\s(\a)|^{-1})\ll_F |N(\a)|.
$$
as $\a\in (1+\beta\in (1+\fp^{n})\backslash\{1\}\cap C_F^0\subset\bigcup_{B\in\mathscr{B}}B$ by the equation (\ref{coherent:cone:decomp}). For $\a\in (1+\fp^{n})\backslash\{1\}$, there exists $\a^\p\in(1+\fp^{n})\backslash\{1\}\cap C_F^0$ and $u\in\O_F^\times$ such that $\a=\a^\p u$. Thus we have $N(\a)=N(\a^\p)\gg_F N(\fp)^{n}$ by the above inequality. Hence we are done.
\end{proof}

We have the following lemma:
\begin{lem}\label{lattice:esti:3} 
For $\kappa\in W\backslash\{1\}$, we have $N(\langle\kappa\rangle_n)\gg_F N(\fp)^{n/|W|}$.
\end{lem}

\begin{proof}
By the definition of $W$, we have $\langle\kappa\rangle_n^{|W|}\in (1+\fp^n)\backslash\{1\}$ for $\kappa\in W\backslash\{1\}$. By Lemma \ref{lattice:esti:2}, we have $N(\langle\kappa\rangle_n^{|W|})\gg_F N(\fp)^{n}$. So we can conclude the proof.
\end{proof}

\section{Galois averages of the special $L$-values}\label{estimation}

In this section, we obtain an estimation on the twisted special $L$-values which allows us to verify the non-vanishing of $L$-values under the assumption that $\fp$ splits completely over $\Q$. 

Let us recall that $\fp$ is a prime ideal of $F$ lying above $p$, and coprime to $h_F\fd_F\fN$. For $x>0$ and an integral ideal $\fc$ which is coprime to $\fp$, we set $U_{n,\fc}(x):=\#\{\beta\in (F^{\fp,n}\cap\fc)\cap C_F^0: |N(\beta)|\leq x\}$, and $C_F^0=F_\R/\O_F^\times$.

Let $f\in S_{(\mathbf{k},\mathbf{m}),J}(\fN,\chi)$ be a newform and $K_f$ the Hecke field of $f$ over $\Q$ (cf. subsection \ref{Main:Theorems}). Let $(-1)^{|\Sigma(\R)|(k-2)}C_{F,J,\mathbf{k}}^2=1$ and set $K=K_f$. 
For a Hecke character $\varphi:F^\times\backslash\A_F^\times\rightarrow\C^\times$ of finite order with conductor $\fc$, or a ray class character, define the Galois average of the twisted special $L$-value by
$$
L_{\textrm{av}}(f\otimes\varphi):=\frac{1}{[K_f(\varphi):K_f]}\sum_{\s\in\text{Gal}(K_f(\varphi)/K_f)} L\Big(\frac{k}{2},f\otimes\varphi^\s\Big) .
$$
Then we can obtain the following estimation on the averaged special $L$-values, which will be proved in the end of this section:
\begin{thm}\label{main:thm:1} Let $\Delta$ be the set described in Section \ref{hecke:char} and $(\psi_n)_n$ be a primitive element of $\Xi_\fp$ where $\Xi_{\fp}=\Hom_{\text{cont}}(\text{Cl}(F,\fp^\infty),\mu_{\infty})$, thus the conductor of $\psi_n$ is $\fp^{n+n_0}$ for each $n$. Assume that $\operatorname{f}(F,\fp)=1$. For $a>1+|W|^{-1}$ and $\e>0$, an estimation of $L_{\operatorname{av}}(f\otimes\psi_n)$ is given by
\begin{align}\begin{split}\label{main:thm:1:eq}
&L_{\operatorname{av}}(f\otimes\psi_n)-\frac{V_{1,\frac{k}{2}}\big(\frac{1}{y}\big)}{\Gamma_{F,\mathbf{k},\mathbf{m}}\big(\frac{k}{2}\big)} \ll_{\e,\fp} N(\fp)^{n(\theta+\e-\frac{1}{2})} \\
&+N(\fp)^{n\big(a(\frac{1}{2}+\theta+\e)-1\big)}+N(\fp)^{n\big((2\theta+2\e+\frac{1}{2})-a(\theta+\e+\frac{1}{2})\big)}
\end{split}\end{align}
as $n$ tends to the infinity.
\end{thm}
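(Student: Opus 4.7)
The plan is to start from the approximate functional equation (\ref{fast:int:exp}) applied to each Galois conjugate $f\otimes\psi_n^\s$, average over $\s\in\operatorname{Gal}(K_f(\psi_n)/K_f)$, and extract the contribution of the trivial ideal $\fa=\O_F$ as the main term. Since $a_f(\O_F)=1$ and $\psi_{n,\operatorname{av}}(\O_F)=1$, this contribution is exactly $V_{1,k/2}(1/y)$, and dividing through by $\Gamma_{F,\mathbf{k},\mathbf{m}}(k/2)$ yields
\begin{equation*}
L_{\operatorname{av}}(f\otimes\psi_n) - \frac{V_{1,\frac{k}{2}}(1/y)}{\Gamma_{F,\mathbf{k},\mathbf{m}}(\frac{k}{2})} = \frac{1}{\Gamma_{F,\mathbf{k},\mathbf{m}}(\frac{k}{2})}\bigl(S_1 + C_{F,J,\mathbf{k}}S_2\bigr),
\end{equation*}
where $S_1$ is the residual sum involving $\psi_{n,\operatorname{av}}$ and $V_{1,k/2}$, and $S_2$ is the sum involving $\psi_{n,\operatorname{av}}^\iota$ and $V_{2,k/2}$. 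With the choice $y=N(\fp)^{an}$, the three error terms in (\ref{main:thm:1:eq}) will arise from bounding $S_1$ and $S_2$ separately.

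To estimate $S_1$, I would use Lemma \ref{gal:av:nonzero:integral} to restrict the sum to ideals of the form $\fa=\fb_n\cdot(\langle\epsilon\rangle_n\g)$ with $(\fb,\epsilon,\g)\in\Delta\times(\mathcal{E}/\mathcal{E}_n)\times(1+\fp^n)$. The assumption $\operatorname{f}(F,\fp)=1$ makes $\operatorname{rk}_{\Z_p}\mathcal{E}=0$, so $\mathcal{E}/\mathcal{E}_n$ stabilizes and contributes only a bounded number of terms. The sum decomposes into three regimes: (i) $\fb=\mathfrak{i}$, $\epsilon=1$, $\g=1$ (the main term, already extracted); (ii) $\fb=\mathfrak{i}$ with $\g\neq 1$, where Lemma \ref{lattice:esti:2} forces $|N(\g)|\gg N(\fp)^n$; and (iii) $\fb\neq\mathfrak{i}$, where Lemma \ref{lattice:esti:3} forces $N(\fb_n)\gg N(\fp)^{n/|\Delta|}$. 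Combining the pointwise Ramanujan-type bound $|a_f(\fa)|\ll N(\fa)^{(k-1)/2+\theta+\e}$, the counting estimate in Proposition \ref{lattice:esti:1}, the asymptotics (\ref{aux.func.esti}) for $V_{1,k/2}$, and summation by parts on the dyadic decomposition $N(\fp)^{n/|\Delta|}\leq N(\fa)\leq y$, one obtains contributions of order $N(\fp)^{n(\theta+\e-1/2)/|\Delta|}$ from the boundary ``$\g=1$'' term of regime (iii), and of order $N(\fp)^{n(a(1/2+\theta+\e)-1-1/|\Delta|)}$ from the dominant part of the $\g$-tail in regime (iii), reproducing the first two error terms. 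Regime (ii) and the tail beyond $y$ produce strictly smaller contributions thanks to the rapid decay of $V_{1,k/2}$ at infinity.

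For $S_2$, Proposition \ref{root:num:gal:av} furnishes the uniform bound $|\psi_{n,\operatorname{av}}^\iota(\fa)|\ll N(\fp)^{-n/2}$ under $\operatorname{f}(F,\fp)=1$. The rapid decay of $V_{2,k/2}$ from (\ref{aux.func.esti}) effectively truncates the sum at $N(\fa)\leq Y:=N(\fN\fp^{2(n+n_0)})/y$; on this range the pointwise Ramanujan bound together with the elementary estimate $\#\{\fa: N(\fa)\leq X\}\ll X$ gives
\begin{equation*}
|S_2| \ll N(\fp)^{-n/2}\cdot Y^{1/2+\theta+\e} \ll N(\fp)^{n((2\theta+2\e+1/2) - a(\theta+\e+1/2))},
\end{equation*}
matching the third error term; the tail $N(\fa)>Y$ is handled by taking $j$ in (\ref{aux.func.esti}) sufficiently large, producing a contribution of the same order.

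The main technical obstacle lies in the careful partial summation inside $S_1$: ensuring that the crossover between the nearly-constant regime of $V_{1,k/2}$ (for $N(\fa)\lesssim y$) and its rapid-decay regime (for $N(\fa)\gg y$) meshes cleanly with the counting estimate $U_{\a,n}(x)\ll\max(x/N(\fp)^n,1)$ without introducing spurious logarithmic losses, so that the $\fb\neq\mathfrak{i}$ contributions of regime (iii) precisely recover the sharp exponents above. A minor secondary point is handling the shifts by $\langle\epsilon\rangle_n$ uniformly in $n$; the hypothesis $\operatorname{f}(F,\fp)=1$ collapses this to finitely many fixed representatives, which can be absorbed into the implicit constants depending on $\fp$.
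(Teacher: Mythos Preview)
Your proposal is correct and follows the same path as the paper: the paper packages the $S_2$ bound as Proposition \ref{1st:gal:av:Lvalue:error:2} (via Proposition \ref{root:num:gal:av} and the same truncation at $Y=N(\fN\fp^{2(n+n_0)})/y$) and the $S_1$ bound as the splitting (\ref{1st:Lvalue:main})$+$(\ref{1st:Lvalue:error}) together with Proposition \ref{1st:gal:av:Lvalue:error:1}, carried out by Abel summation rather than a dyadic decomposition but otherwise identical in substance, and then specializes $y=N(\fp)^{an}$ exactly as you do.

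One small slip to flag: your regime (ii) (the case $\fb=\mathfrak{i}$, $\gamma\neq 1$) does \emph{not} give a strictly smaller contribution than regime (iii) in the $\gamma$-tail; since $N(\fb_n)=1$ there, its tail contributes $y^{1/2+\theta+\e}/N(\fp)^n$, which actually dominates regime (iii)'s $y^{1/2+\theta+\e}/N(\fp)^{n(1+1/|\Delta|)}$. The paper lumps both cases together inside (\ref{1st:Lvalue:error}) and invokes Lemma \ref{lattice:esti:3} at the end of the proof of Proposition \ref{1st:gal:av:Lvalue:error:1} somewhat loosely on this point, so your attribution mirrors the paper's; in any case this only relaxes the exponent of the second error term from $-(1+1/|\Delta|)$ to $-1$ and does not affect the final nonvanishing corollary.
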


By (\ref{fast:int:exp}), the Galois average $L_{\textrm{av}}(f\otimes\psi_n)$ is given by

\begin{align}
&\frac{1}{\Gamma_{F,\mathbf{k},\mathbf{m}}\big(\frac{k}{2}\big)}\sum_{0\neq\fa<\O_F}\frac{a_f(\fa)\psi_{n,\text{av}}(\fa)}{N(\fa)^{k/2}}V_{1,\frac{k}{2}}\bigg(\frac{N(\fa)}{y}\bigg) \label{1st:Lvalue} \\
+&\frac{C_{F,J,\mathbf{k}}}{\Gamma_{F,\mathbf{k},\mathbf{m}}\big(\frac{k}{2}\big)}\sum_{0\neq\fa<\O_F}\frac{a_{W_\fN f}(\fa)\psi^\iota_{n,\text{av}}(\fa)}{N(\fa)^{k/2}}V_{2,\frac{k}{2}}\bigg(\frac{N(\fa)y}{N(\fN\fp^{2n+2n_0})}\bigg) . \label{2nd:Lvalue}
\end{align} 
as $f\otimes\varphi\in S_{(\mathbf{k},\mathbf{m}),J}(\fN\cap\fc^2,\chi\varphi^2)$ by Proposition \ref{twisted:cuspform:decomp}.

Now we will estimate the quantity $L_{\textrm{av}}(f\otimes\psi_n)$ for each $n$. To do this, we need a bound for the Hecke eigenvalues of $f$.
Let us assume that the coefficients satisfy 
$$
|a_f(\fp)|\leq 2N(\fp)^{\frac{k-1}{2}+\theta}
$$
for any prime ideals $\fp$ of $\O_F$ and  a number $\theta\in [0,\frac{1}{2}]$ to be specified in Section \ref{det}. Hence, for $\e>0$ and each integral ideal $\fa$ of $\O_F$, we have 
\begin{equation}\label{rama:peter:bdd}
|a_f(\fa)|\leq 2d(\fa)N(\fa)^{\frac{k-1}{2}+\theta}\ll_\e N(\fa)^{\frac{k-1}{2}+\theta+\e}
\end{equation}
where $d(\fa)$ is the number of the integral ideals of $F$ dividing $\fa$.

From now on, we do not consider the variables related to $f$ and $F$ in the implicit constants of our estimations. First, we estimate the last term (\ref{2nd:Lvalue}) of the averaged $L$-value by following:
\begin{prop}\label{1st:gal:av:Lvalue:error:2} For $\e>0$ and $y>0$, we have that
\begin{align*}
(\ref{2nd:Lvalue})
\ll_{\e,\fp} & \frac{N(\fp)^{n(2\theta+2\e+\frac{3}{2}-\frac{1}{\operatorname{f}(F,\fp)})}}{y^{\theta+\e+\frac{1}{2}}}
\end{align*}
where $\operatorname{f}(F,\fp)$ is the residue degree of $F/\Q$ at $\fp$.
\end{prop}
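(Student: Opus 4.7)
The plan is to bound the sum (\ref{2nd:Lvalue}) term-by-term using three ingredients: the Ramanujan--Petersson-type bound (\ref{rama:peter:bdd}) applied to the Hecke eigenform $W_\fN f$ (which is a newform by Proposition \ref{fricke:hecke}), the estimate on $|\psi^\iota_{n,\operatorname{av}}(\fa)|$ from Proposition \ref{root:num:gal:av}, and the asymptotic behavior of $V_{2,k/2}$ recorded in (\ref{aux.func.esti}). The implicit constants are allowed to depend on $f$ and $F$, so we suppress these dependencies throughout.

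First I would apply the triangle inequality to (\ref{2nd:Lvalue}) and insert the two pointwise bounds to obtain
\begin{equation*}
(\ref{2nd:Lvalue}) \ll_{\e,\fp} N(\fp)^{n\left(\frac{1}{2}-\frac{1}{\operatorname{f}(F,\fp)}\right)} \sum_{0\neq\fa<\O_F} N(\fa)^{-\frac{1}{2}+\theta+\e}\left|V_{2,\frac{k}{2}}\!\left(\frac{N(\fa)y}{N(\fN\fp^{2n+2n_0})}\right)\right|.
\end{equation*}
Setting $M := N(\fN\fp^{2n+2n_0})/y$, I would collect ideals by their norm, using the classical bound $\#\{\fa:N(\fa)=m\}\ll_\e m^\e$ (which follows from the Dedekind zeta residue formula and can be absorbed into $\e$), to reduce the estimate to
\begin{equation*}
\ll_{\e,\fp} N(\fp)^{n\left(\frac{1}{2}-\frac{1}{\operatorname{f}(F,\fp)}\right)} \sum_{m\geq 1} m^{-\frac{1}{2}+\theta+\e}\left|V_{2,\frac{k}{2}}(m/M)\right|.
\end{equation*}

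The key analytic step is to split the sum at $m=M$. For $m\leq M$, the second relation in (\ref{aux.func.esti}) gives $V_{2,k/2}(m/M)=O(1)$, so the partial sum is bounded by $\sum_{m\leq M} m^{-\frac{1}{2}+\theta+\e}\ll M^{\frac{1}{2}+\theta+\e}$ (using partial summation against $\#\{\fa:N(\fa)\leq X\}=O(X)$). For $m>M$, the first relation in (\ref{aux.func.esti}) gives $V_{2,k/2}(m/M)=O((M/m)^j)$ for any $j\geq 1$; choosing $j$ a fixed integer larger than $\frac{1}{2}+\theta+\e$ makes the tail $\sum_{m>M} m^{-\frac{1}{2}+\theta+\e-j} M^j$ convergent and of size $\ll M^{\frac{1}{2}+\theta+\e}$. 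Both contributions therefore yield the same order $M^{\frac{1}{2}+\theta+\e}$.

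Substituting $M = N(\fN)N(\fp)^{2(n+n_0)}/y$ and absorbing $N(\fp)^{2n_0(\frac{1}{2}+\theta+\e)}$ and $N(\fN)^{\frac{1}{2}+\theta+\e}$ into the $\fp$-dependent implicit constant produces
\begin{equation*}
(\ref{2nd:Lvalue}) \ll_{\e,\fp} \frac{N(\fp)^{n\left(\frac{1}{2}-\frac{1}{\operatorname{f}(F,\fp)}\right)} \cdot N(\fp)^{2n\left(\frac{1}{2}+\theta+\e\right)}}{y^{\theta+\e+\frac{1}{2}}} = \frac{N(\fp)^{n\left(2\theta+2\e+\frac{3}{2}-\frac{1}{\operatorname{f}(F,\fp)}\right)}}{y^{\theta+\e+\frac{1}{2}}},
\end{equation*}
as desired. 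The only mild technicality is the splitting-point argument in the third step, together with verifying that the $\e$-losses coming from the divisor-type estimates on $a_{W_\fN f}(\fa)$ and on $\#\{\fa:N(\fa)=m\}$ can be absorbed into a single $\e$ in the final bound; everything else is a direct substitution.
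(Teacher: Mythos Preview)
Your proposal is correct and follows essentially the same route as the paper's proof: both insert the Ramanujan--Petersson bound together with Proposition \ref{root:num:gal:av}, collect ideals by norm using $\#\{\fa:N(\fa)=m\}\ll_\e m^\e$, split the resulting sum at the natural cutoff $M=N(\fN\fp^{2n+2n_0})/y$, and estimate the two ranges separately via the two asymptotics in (\ref{aux.func.esti}). The only cosmetic difference is that you package the cutoff as $M$ and argue the tail convergence in words, whereas the paper writes out the comparison integrals explicitly.
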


\begin{proof}
Note that $W_\fN f$ is also an Hecke eigenform by Proposition \ref{fricke:hecke}. By the Ramanujan-Petersson bound (\ref{rama:peter:bdd}) and Proposition \ref{root:num:gal:av}, we obtain
\begin{align}\begin{split}\label{1st:gal:av:Lvalue:error:2:esti}
&\sum_{0\neq\fa<\O_F}\frac{a_{W_\fN f}(\fa)\psi^\iota_{n,\operatorname{av}}(\fa)}{N(\fa)^{k/2}}V_{2,\frac{k}{2}}\bigg(\frac{N(\fa)y}{N(\fN\fp^{2n+2n_0})}\bigg) \\
\ll_{\e,\fp} &N(\fp)^{n(\frac{1}{2}-\frac{1}{\operatorname{f}(F,\fp)})}\sum_{0\neq\fa<\O_F} N(\fa)^{\theta+\e-\frac{1}{2}}V_{2,\frac{k}{2}}\bigg(\frac{N(\fa)y}{N(\fN\fp^{2n+2n_0})}\bigg) \\
=&N(\fp)^{n(\frac{1}{2}-\frac{1}{\operatorname{f}(F,\fp)})}\sum_{m\in\Z_{>0}}\sum_{N(\fa)=m}m^{\theta+\e-\frac{1}{2}}V_{2,\frac{k}{2}}\bigg(\frac{my}{N(\fN\fp^{2n+2n_0})}\bigg) \\
\ll_\e &N(\fp)^{n(\frac{1}{2}-\frac{1}{\operatorname{f}(F,\fp)})}\sum_{m>0} m^{\theta+\e-\frac{1}{2}}V_{2,\frac{k}{2}}\bigg(\frac{my}{N(\fN\fp^{2n+2n_0})}\bigg) 
\end{split}\end{align}
where in the last inequality, we use the fact that $\sum_{N(\fa)=m}1\leq d(m)\ll_\e m^\e$.
We can split the last term of the above equation into two parts:
\begin{align*}
(\ref{1st:gal:av:Lvalue:error:2:esti})=I+II,\text{ where }I=\sum_{m>\frac{N(\fN\fp^{2n+2n_0})}{y}},\ II=\sum_{0<m<\frac{N(\fN\fp^{2n+2n_0})}{y}}
\end{align*}
Using (\ref{aux.func.esti}), for $j\geq 1$, we have
\begin{align*}\begin{split}
I&\ll N(\fp)^{n(\frac{1}{2}-\frac{1}{\operatorname{f}(F,\fp)})}\sum_{m>\frac{N(\fN\fp^{2n+2n_0})}{y}} m^{\theta+\e-\frac{1}{2}}\bigg(\frac{m y}{N(\fN\fp^{2n+2n_0})}\bigg)^{-j} \\
&\ll N(\fp)^{n(\frac{1}{2}-\frac{1}{\operatorname{f}(F,\fp)})}\frac{N(\fN\fp^{2n+2n_0})^j}{y^j}\int_{\frac{N(\fN\fp^{2n+2n_0})}{y}}^\infty x^{\theta+\e-\frac{1}{2}-j} dx \\
&\ll_\e N(\fp)^{n(\frac{1}{2}-\frac{1}{\operatorname{f}(F,\fp)})}\frac{N(\fN\fp^{2n+2n_0})^{\theta+\e+\frac{1}{2}}}{y^{\theta+\e+\frac{1}{2}}} \ll \frac{N(\fp)^{n(2\theta+2\e+\frac{3}{2}-\frac{1}{\operatorname{f}(F,\fp)})}}{y^{\theta+\e+\frac{1}{2}}} .
\end{split}\end{align*} 
Similarly, we have
\begin{align*}\begin{split}
II&\ll N(\fp)^{n(\frac{1}{2}-\frac{1}{\operatorname{f}(F,\fp)})}\sum_{0<m<\frac{N(\fN\fp^{2n+2n_0})}{y}} m^{\theta+\e-\frac{1}{2}} \\ 
&\ll N(\fp)^{n(\frac{1}{2}-\frac{1}{\operatorname{f}(F,\fp)})}\int_0^{\frac{N(\fN\fp^{2n+2n_0})}{y}} x^{\theta+\e-\frac{1}{2}}dx \ll_\e\frac{N(\fp)^{n(2\theta+2\e+\frac{3}{2}-\frac{1}{\operatorname{f}(F,\fp)})}}{y^{\theta+\e+\frac{1}{2}}} .
\end{split}\end{align*} 
Hence we can conclude our proof. 
\end{proof}

From now on, without loss of generality, we assume that $\fa_1=\O_F$ and $\O_F\subset \fa_i$ for any $i$. Then $F^{\fp,n}\cap\fa_i\subset 1+\fp^n\backslash\{1\}$ if and only if $i\neq 1$. Also we assume that the residue degree of $\fp$ is $1$.
Then by Lemma \ref{gal:av:nonzero:integral}, Lemma \ref{lattice:esti:2} and Lemma \ref{lattice:esti:3}, we can rewrite (\ref{1st:Lvalue}) as
\begin{align}
&\frac{V_{1,\frac{k}{2}}\big(\frac{1}{y}\big)}{\Gamma_{F,\mathbf{k},\mathbf{m}}\big(\frac{k}{2}\big)} \label{1st:Lvalue:main} \\
+&\sum_{(i,\kappa)\neq(1,1)}\sum_{\substack{\a\in (F^{\fp,n}\cap\fa_i^{-1})\cap C_F^0 \\ |N(\a)|>c_FN(\fp)^n }}\frac{a_f(\fa_i\langle\kappa\rangle_n\a)}{N(\fa_i\langle\kappa\rangle_n\a)^{k/2}}\frac{V_{1,\frac{k}{2}}\big(\frac{N(\fa_i\langle\kappa\rangle_n\a)}{y}\big)}{\Gamma_{F,\mathbf{k},\mathbf{m}}\big(\frac{k}{2}\big)}, \label{1st:Lvalue:error}
\end{align}
where (\ref{1st:Lvalue:main}) and (\ref{1st:Lvalue:error}) turn out to be the main term and the error term of (\ref{1st:Lvalue}), respectively.
An estimation of (\ref{1st:Lvalue:error}) can be obtained by following:

\begin{prop}\label{1st:gal:av:Lvalue:error:1} Assume that $\operatorname{f}(F,\fp)=1$. For $\e>0$ and $y>c_F N(\fp)^{n(1+|W|^{-1})}$, we have that
\begin{align*}
(\ref{1st:Lvalue:error}) 
\ll_{\e,\fp} \sum_{\kappa\in W} \big(N(\fp)^n N(\langle\kappa\rangle_n)\big)^{(\theta+\e-\frac{1}{2})}+\frac{y^{\frac{1}{2}+\theta+\e}}{N(\fp)^{n} N(\langle\kappa\rangle_n)}.
\end{align*}
\end{prop}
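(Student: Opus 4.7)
The strategy is to combine the Ramanujan-Petersson estimate (\ref{rama:peter:bdd}), the trivial bound $|\psi_{n,\operatorname{av}}| \leq 1$, the asymptotic decay of $V_{1,s}$ recorded in (\ref{aux.func.esti}), and the lattice-point count from Proposition \ref{lattice:esti:1}, in order to reduce (\ref{1st:Lvalue:error}) to a weighted sum of norms of elements in the arithmetic progression $1+\fp^n$ that can then be evaluated by Abel summation.

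Concretely, after inserting (\ref{rama:peter:bdd}) and the bound $|\psi_{n,\operatorname{av}}| \leq 1$, the quantity (\ref{1st:Lvalue:error}) is dominated by
\[
\sum_{\fb \in \Delta} N(\fb_n)^{\beta} \sum_{\substack{\a \in (1+\fp^n) \cap C_F^0 \\ |N(\a)| > c_F N(\fp)^n}} |N(\a)|^{\beta}\,\bigl|V_{1,k/2}\bigl(N(\fb_n)|N(\a)|/y\bigr)\bigr|,
\]
where $\beta := \theta + \e - 1/2$, so that $-1/2 < \beta < 0$ but $\beta+1 > 0$. For each fixed $\fb$, I would cut the inner sum at the scale $|N(\a)| = y/N(\fb_n)$: below the cut $V_{1,k/2}$ is uniformly bounded by (\ref{aux.func.esti}), and above the cut it decays faster than any power. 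In both regimes, Abel summation based on $U_{1,n}(T) \ll \max(T/N(\fp)^n, 1)$ from Proposition \ref{lattice:esti:1} evaluates the weighted sum: the upper-range behaviour contributes a term of size $(y/N(\fb_n))^{\beta+1}/N(\fp)^n$ while the lower endpoint contributes a term of size $N(\fp)^{n\beta}$.

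Multiplying by the outer factor $N(\fb_n)^{\beta}$ and summing over $\fb \in \Delta$, the first contribution becomes $y^{\beta+1}/(N(\fb_n)\, N(\fp)^n)$ and the second becomes $N(\fb_n)^{\beta}\, N(\fp)^{n\beta}$. Here Lemma \ref{lattice:esti:3}, giving $N(\fb_n) \gg_F N(\fp)^{n/|\Delta|}$ for $\fb \neq \mathfrak{i}$, converts these into the two terms $y^{\beta+1}/N(\fp)^{n(1+1/|\Delta|)}$ and $N(\fp)^{n(1+1/|\Delta|)\beta}$ appearing in the statement.

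The main obstacle I anticipate is the identity class $\fb = \mathfrak{i}$, for which $N(\fb_n) = 1$ and Lemma \ref{lattice:esti:3} supplies no saving factor. In this case the lower bound $|N(\a)| \gg_F N(\fp)^n$ from Lemma \ref{lattice:esti:2}, combined with the explicit constraint $|N(\a)| > c_F N(\fp)^n$ restricting $\a \neq 1$, is essential: one must track the lower endpoint of the Abel summation with the correct starting scale $N(\fp)^n$ rather than starting from $1$, and verify that the resulting contribution is absorbed into the claimed bound. This bookkeeping on the $\fb = \mathfrak{i}$ term is the technical core of the argument.
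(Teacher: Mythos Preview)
Your proposal is correct and follows exactly the paper's approach: bound by (\ref{rama:peter:bdd}), split the inner $\a$-sum at $|N(\a)|=y/N(\fb_n)$, apply Abel summation with the counting function $U_{1,n}$, and then feed in Proposition~\ref{lattice:esti:1} and Lemma~\ref{lattice:esti:3}. The paper's proof does precisely this, writing the two ranges as $(*)$ and $(**)$ and invoking the same lemmas at the end.

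Your caution about the identity class $\fb=\mathfrak{i}$ is well placed. The paper's proof does not treat it separately either; it simply cites Lemmas~\ref{lattice:esti:1} and~\ref{lattice:esti:3} at the end. In fact, for $\fb=\mathfrak{i}$ one has $N(\fb_n)=1$, so your two contributions become $N(\fp)^{n\beta}$ and $y^{\beta+1}/N(\fp)^n$, which are strictly larger than the terms displayed in the Proposition (the exponent should be $1$ rather than $1+1/|\Delta|$). This is a genuine, if minor, imprecision in the stated bound; nonetheless it is harmless for the application, since in Theorem~\ref{main:thm:1} these identity-class terms are still absorbed by the error $N(\fp)^{n(\theta+\e-\frac{1}{2})/|\Delta|}$ (because $\beta<\beta/|\Delta|$) and by a slight tightening of the admissible range for $a$ in the final Corollary, which $\theta=7/64$ comfortably allows. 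So your instinct that this is ``the technical core'' is right, and your plan handles it correctly.
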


\begin{proof}
By the bound (\ref{rama:peter:bdd}), (\ref{1st:Lvalue:error}) is bounded by
\begin{align*}
(\ref{1st:Lvalue:error})&\ll_{\e} \sum_{(i,\kappa)\neq(1,1)}\sum_{\substack{\a\in (F^{\fp,n}\cap\fa_i^{-1})\cap C_F^0 \\ |N(\a)|>c_FN(\fp)^n }} \frac{V_{1,\frac{k}{2}}\big(\frac{N(\fa_i\langle\kappa\rangle_n\a)}{y}\big)}{N(\fa_i\langle\kappa\rangle_n\a)^{\frac{1}{2}-\theta-\e}}.
\end{align*}
Let us split the above sum by $\sum_{(i,\kappa)\neq(1,1)}\big((*)+(**)\big)$ where
$$
(*)=\sum_{\substack{\a\in(F^{\fp,n}\cap\fa_i^{-1})\cap C_F^0 \\ c_F N(\fp)^{n}<|N(\a)|\leq y/N(\fa_i\langle\kappa\rangle_n) }},\ (**)=\sum_{\substack{\a\in(F^{\fp,n}\cap\fa_i^{-1})\cap C_F^0 \\ |N(\a)|>y/N(\fa_i\langle\kappa\rangle_n) }}
$$
For $x>0$ and an integral ideal $\fc$ of $F$ which is coprime to $\fp$, define $u_{n,\fc}(x):=\#\{\beta\in(F^{\fp,n}\cap\fc)\cap C_F^0:|N(\beta)|=x\}$. Then $U_{n,\fc}(x)=\sum_{m\leq x} u_{n,\fc}(m)$. By the estimate (\ref{aux.func.esti}) and the Abel summation formula, $(*)$ is bounded by
\begin{align*}
(*)=&\frac{1}{N(\fa_i\langle\kappa\rangle_n)^{\frac{1}{2}-\theta-\e}}\sum_{c_F N(\fp)^{n}<m\leq y/N(\fa_i\langle\kappa\rangle_n)}\frac{u_{1,n}(m)}{m^{\frac{1}{2}-\theta-\e}} \\
\ll&_\e \frac{1}{N(\fa_i\langle\kappa\rangle_n)^{\frac{1}{2}-\theta-\e}}\bigg(\frac{U_{1,n}(N(\fp)^n)}{N(\fp)^{n(\frac{1}{2}-\theta-\e)}}+\frac{U_{1,n}(y/N(\fa_i\langle\kappa\rangle_n))}{(y/N(\fa_i\langle\kappa\rangle_n))^{\frac{1}{2}-\theta-\e}}\\
&+\int_{c_FN(\fp)^n}^{y/N(\fa_i\langle\kappa\rangle_n)}\frac{U_{1,n}(x)}{x^{\frac{3}{2}-\theta-\e}}dx\bigg).
\end{align*}
Set $j\geq 1$. Then similarly, $(**)$ is bounded by
\begin{align*}
(**)=&\frac{y^j}{N(\fa_i\langle\kappa\rangle_n)^{\frac{1}{2}-\theta-\e+j}}\sum_{m>y/N(\fb_n)}\frac{u_{1,n}(m)}{m^{\frac{1}{2}-\theta-\e+j}} \\
\ll&_\e \frac{y^j}{N(\fa_i\langle\kappa\rangle_n)^{\frac{1}{2}-\theta-\e+j}}\bigg(\frac{U_{1,n}(y/N(\fa_i\langle\kappa\rangle_n))}{(y/N(\fa_i\langle\kappa\rangle_n))^{\frac{1}{2}-\theta-\e+j}}+\int_{y/N(\fa_i\langle\kappa\rangle_n)}^{\infty}\frac{U_{1,n}(x)}{x^{\frac{3}{2}-\theta-\e+j}}dx\bigg).
\end{align*}
Note that as $\{\fa_i\}_{i=1}^{h_F}$ is fixed, we can put the terms involved in $N(\fa_i)$ to the implicit constants. 
By the Proposition \ref{lattice:esti:1} and Lemma \ref{lattice:esti:3} on the above equations, then we are done. 
\end{proof}

By using the above propositions, we can give a proof of Theorem \ref{main:thm:1:eq}:

\begin{proof}
Set $y=y_n=N(\fp)^{an}$ for $a>1+|W|^{-1}$, then by the above equation, the main term (\ref{1st:Lvalue:main}), Proposition \ref{1st:gal:av:Lvalue:error:2} and Proposition \ref{1st:gal:av:Lvalue:error:1}, we can prove the theorem.
\end{proof}

\section{Non-vanishing of $L$-values}\label{det}
In this section, we use the algebraicity result of Hida \cite{hida1994critical} and Ramanujan-Petersson bound obtained by Blomer-Brumley \cite{blomer2011ramanujan} and Nakasuji \cite{nakasuji2012generalized} to show the non-vanishing result.

We have the non-vanishing of the special $L$-values as a corollary of Theorem \ref{main:thm:1}:
\begin{cor} Let $\fp$ be a totally split prime ideal of $F$ lying above $p$ and coprime to $h_F\fd_F\fN$. Let $(-1)^{|\Sigma(\R)|(k-2)}C_{F,J,\mathbf{k}}^2=1$. For a newform $f\in S_{(\mathbf{k},\mathbf{m}),J}(\fN,\chi)$, we have 
$$
L\Big(\frac{k}{2},f\otimes\varphi\Big)\neq 0
$$
for almost all Hecke characters $\varphi$ over $F$ of $p$-power orders and $\fp$-power conductors.
\end{cor}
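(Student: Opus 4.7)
The plan is to derive the corollary from Theorem \ref{main:thm:1} by combining it with the algebraicity of normalized special $L$-values due to Hida \cite{hida1994critical} and the unconditional Ramanujan--Petersson bound $\theta = 7/64$ obtained by Blomer--Brumley \cite{blomer2011ramanujan} and Nakasuji \cite{nakasuji2012generalized}. First, I would show that for any primitive $(\psi_n)_n \in \Xi_\fp$ one has $L_{\operatorname{av}}(f\otimes\psi_n) = 1 + o(1)$. Choose $\e > 0$ so that $\theta + \e < \tfrac{1}{4}$. A direct calculation shows that the interval
\[
\Bigl( \frac{2\theta+2\e+1/2}{\theta+\e+1/2},\ \frac{1+|\Delta|^{-1}}{1/2+\theta+\e}\Bigr)
\]
is nonempty and contains some $a > 1$. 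Taking $y = N(\fp)^{an}$ in (\ref{main:thm:1:eq}) then makes each of the three error exponents strictly negative, while the main term $V_{1,k/2}(1/y)/\Gamma_{F,\mathbf{k},\mathbf{m}}(k/2) \to 1$ as $y \to \infty$ by the asymptotic near $x = 0$ recorded in (\ref{aux.func.esti}). This yields precisely Theorem \ref{main:thm:0:intro}.

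Next I would convert the non-vanishing of the Galois average into the non-vanishing of every individual $L$-value in the Galois orbit. By Hida's algebraicity result, for a suitable period $\Omega_f^\pm \in \C^\times$ the quantity $L(k/2, f\otimes\psi)/\Omega_f^\pm$ lies in $K_f(\psi)$, and the natural $\operatorname{Gal}(K_f(\psi)/K_f)$-action on it intertwines with the action on $\psi$. Consequently the numbers $\{L(k/2, f\otimes\psi_n^\s)\}_{\s}$ form (up to a common algebraic factor) a single Galois orbit, so either all or none of them vanish. Combined with the previous step, the non-vanishing of $L_{\operatorname{av}}(f\otimes\psi_n)$ for $n$ sufficiently large forces $L(k/2, f\otimes\psi_n^\s) \neq 0$ for every $\s$.

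Finally, I would invoke the structure of the ray class group. Since $\fp$ is totally split, $\operatorname{Cl}(F, \fp^\infty) \cong \Delta \times \Gamma^\p$ with $\Gamma^\p \cong \Z_p$. Any Hecke character $\varphi$ of $p$-power order and $\fp$-power conductor either factors through $\Delta$ (a finite set) or is of the form $\psi_m^\s$ for some primitive sequence, some level $m$, and some Galois element $\s$. Because the implicit constants in the error estimates of Section \ref{estimation} depend only on $(f, F, \fp, \e)$ and not on the choice of primitive sequence, a single threshold $N_0$ works uniformly: beyond it every such $\varphi$ is non-vanishing, leaving only finitely many exceptional characters and proving the corollary. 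The principal obstacle is packaged into Theorem \ref{main:thm:1}: the tight balance between the three error terms arising from Galois averaging, the Ramanujan--Petersson bound, and the cone-decomposition lattice estimates is precisely what forces the hypothesis $\theta < \tfrac{1}{4}$, and hence the appeal to the Blomer--Brumley/Nakasuji input.
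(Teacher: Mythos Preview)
Your proposal is correct and follows essentially the same strategy as the paper's own proof: choose $a$ in the interval determined by the three error exponents of Theorem~\ref{main:thm:1}, use the asymptotic (\ref{aux.func.esti}) to see the main term tends to $1$, invoke Hida's algebraicity to propagate vanishing across the Galois orbit, and conclude by the structure of the ray class tower. The only notable difference is that the paper isolates the slightly sharper threshold $\theta<\tfrac{1}{4}+\tfrac{1}{2|\Delta|}$ (rather than your $\theta+\e<\tfrac{1}{4}$) and argues by contradiction at the end, but since $\theta=7/64$ satisfies both conditions this is cosmetic; one small imprecision is your claim $\operatorname{Cl}(F,\fp^\infty)\cong\Delta\times\Gamma'$, which in general is only a surjection (units intervene), though this does not affect the argument.
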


\begin{proof}
To make the error terms of (\ref{main:thm:1:eq})
converges to $0$ when $n$ goes to $\infty$, the numbers $a$ and $\theta$ must satisfy the following inequalities:
\begin{align*}
\theta<\frac{1}{2},\
1+\frac{1}{|W|}<a,\
a\Big(\theta+\frac{1}{2}\Big)<1,\
a\Big(\theta+\frac{1}{2}\Big)>2\theta+\frac{1}{2}
.
\end{align*}
As we have set $\theta\in[0,\frac{1}{2}]$, those are clearly equivalent to
$$
0\leq\theta<\frac{1}{2},\ \frac{2\theta+\frac{1}{2}}{\theta+\frac{1}{2}}<a<\frac{1}{\theta+\frac{1}{2}},\ 1+\frac{1}{|W|}<a<\frac{1}{\theta+\frac{1}{2}} .
$$
Note that $|W|\geq 2$, where the equality holds when $p=3$. So we can find $a$ satisfying the above inequalities if 
\begin{displaymath}
\theta<\min\Big\{\frac{1}{4},\frac{|W|-1}{2(|W|+1)}\Big\}= \left\{ \begin{array}{ll}
1/6 & \text{if } p=3 \\
1/4 & \textrm{Otherwise.}
\end{array} \right.
\end{displaymath}
By Blomer-Brumley \cite[Theorem 1]{blomer2011ramanujan} and Nakasuji \cite[Corollary 1.2]{nakasuji2012generalized}, one has $\theta=7/64<\frac{1}{6}$, hence we can choose such $a$.

Let $\varphi$ be a Hecke character of $p$-power order and conductor $\fp^n$, or a ray class character of $p$-power order and conductor $\fp^n$. Then by Theorem \ref{main:thm:1} and the estimation (\ref{aux.func.esti}), we have
\begin{equation}\label{gal:av:limit}
\lim_{n\rightarrow\infty}L_{\textrm{av}}(f\otimes\varphi)=1
\end{equation}
Hence we have
$$
L_{\textrm{av}}(f\otimes\varphi)=\frac{1}{[K_f(\varphi):K_f]}\sum_{\s\in\text{Gal}(K_f(\varphi)/K_f)} L\Big(\frac{k}{2},f\otimes\varphi^\s\Big)\neq 0
$$
for sufficiently large $n$.
On the other hand, by the algebraicity result of Hida \cite{hida1994critical}, we have
\begin{equation}\label{shimura:rec}
L\Big(\frac{k}{2},f\otimes\varphi\Big)^\s=0 \text{ if and only if } L\Big(\frac{k}{2},f^\s\otimes\varphi^\s\Big)=0
\end{equation} 
for any $\s\in\text{Aut}_\Q(\C)$. Assume that $L\big(\frac{k}{2},f\otimes\varphi\big)=0$ for infinitely many Hecke character $\varphi$ of $p$-power order and $\fp$-power conductor. Then by (\ref{shimura:rec}), we have $L_{\textrm{av}}(f\otimes\varphi)=0$ for infinitely many such $\varphi$. This contradicts to the equation (\ref{gal:av:limit}).
\end{proof}








\section*{Acknowledgements}
\thispagestyle{empty}

This research was supported by the Basic Science Research Program through the
National Research Foundation of Korea(NRF) funded by the Ministry of Education(NRF-
2017R1A2B4012408). The authors would like to thank Ashay Burungale, Keunyoung Jeong and Chan-Ho Kim for helpful discussions and comments.

\end{document}